\documentclass[11pt]{article}

\usepackage{amsmath,amssymb,amsthm}
\usepackage[utf8]{inputenc}

\usepackage[english]{babel}

\usepackage[pdftex]{graphicx}
\DeclareGraphicsExtensions{.png,.pdf,.jpg}
\usepackage{color}

\usepackage[colorlinks=false,hidelinks]{hyperref}

\usepackage{multicol}

\usepackage{subcaption}

\newtheorem{lemma}{Lemma}
\newtheorem{rem}{Remark}
\newtheorem{theo}{Theorem}
\newtheorem{cor}{Corollary}

\newcommand{\N}{\mathbb{N}}
\newcommand{\Z}{\mathbb{Z}}
\newcommand{\C}{\mathbb{C}}

\newcommand{\e}{\mathrm{e}}
\newcommand{\dd}{\mathrm{d}}

\DeclareRobustCommand{\stirling}{\genfrac\{\}{0pt}{}}

\title{Asymptotics and zero behaviour of Geometric polynomials}

\author{M. Bello-Hernández \and M. Benito \and Ó. Ciaurri \and E. Fernández}

\begin{document}



\maketitle

\date{}

%


\abstract{ 
We obtain some results on the asymptotic behaviour of Geometric polynomials in both the complex plane minus $[-1,0]$ and the interval $(-1,0)$.  We also find the distance of consecutive zeros of these polynomials in the bulk of the interval $(-1,0)$. We also prove that they satisfy certain orthogonality properties. Due to their relationship with Eulerian polynomials, the results for Geometric polynomials can be transferred to Eulerian polynomials verbatim.
}

\vspace{0.5cm}
\noindent
\textbf{Keywords:}\\
\hspace*{1cm}Geometric polynomials, Eulerian polynomials, Asymptotics, Zero distribution, Orthogonality.

\vspace{0.5cm}
\noindent
\textbf{MSC Class:}\\
\hspace*{1cm}11B83, 33C47, 41A46

\section{Introduction}
Eulerian and Geometric polynomials  play a significant role in combinatorics, special functions, and analytic number theory. They form a distinguished sequence of polynomials characterized by their close connection to the structure of set partitions, making them a useful tool for encoding combinatorial information in algebraic form. 

The Geometric polynomials $(P_n(z))_{n\ge 0}$ are given by
\begin{equation}
\label{mainEquation}
f(z,t)=\frac{1}{1-z(e^t-1)}=\sum_{n=0}^\infty P_n(z)\frac{t^n}{n!},
\end{equation}
meanwhile Eulerian polynomials $(A_n(z))_{n\ge 1}$  are  defined by
\[
\sum_{k=1}^\infty k^nz^k=\frac{zA_n(z)}{(1+z)^n}.
\]
These functional identities determine uniquely  both sequence 
of polynomials and provide a convenient framework for deriving explicit formulas, recurrence relations, and structural properties of these polynomials. See for example \cite{Ade_Ben_Nko}, \cite{Boy_Dil}, \cite{Dur}, \cite{Gra_Knu_Pat}, \cite{Mez}, \cite{Tan}. 

 The polynomials $(P_n(z))_{n\ge 0}$  can be given in term of Stirling number of second kind $\stirling{n}{k}$, namely
\begin{equation}
\label{eqPolStirling}
P_n(z)=\sum_{k=1}^{n}k!\stirling{n}{k}z^k,\quad n=1,2,\ldots,
\end{equation}
since $\frac{(e^t-1)^k}{k!}=\sum_{j=k}^\infty \stirling{j}{k}\frac{t^j}{j!}$. Stirling number of second kind  stands for the number of ways to partition a set of $n$ objects into $k$ non-empty subsets. Euler's work \cite[p. 485]{Eul} seems to be where these polynomials first appear.  Geometric polynomials are also called Fubini polynomials because Fubini numbers are given by
\begin{equation}
\label{FubNum}
P_n(1)=\frac12\sum_{k=0}^\infty\frac{k^n}{2^k},\quad n=0,1,\ldots
\end{equation}
These numbers enumerate various combinatorial objects, including ordered partitions, which are partitions that also take the order of blocks into account. See  for example \cite{Fla_Sed}  and \cite{Gro}.

The Eulerian polynomials are given  by $A_0(z)=1$,
\[
A_n(z)=\sum_{k=0}^{n-1} \left\langle \begin{matrix} n \\ k \end{matrix} \right\rangle z^k,\quad n=1,2,\ldots,
\]
where $\left\langle \begin{matrix} n \\ k \end{matrix} \right\rangle$ are the Eulerian numbers. See  for example \cite[p. 51]{Com}. It is well-known that Eulerian numbers have also a combinatorial interpretation in term of permutations. In fact, the Eulerian number $\left\langle \begin{matrix} n \\ k \end{matrix} \right\rangle$  counts the number of permutations the number of permutations
in $\{1,2,\ldots,n\}$ with $k$ descents; so they satisfy the recurrence
\[
\left\langle \begin{matrix} n+1 \\ k \end{matrix} \right\rangle=(n-k+2)\left\langle \begin{matrix} n \\ k-1 \end{matrix} \right\rangle+k\left\langle \begin{matrix} n \\ k \end{matrix} \right\rangle,\quad n=0,1,\ldots, \quad k\ge 2,
\]
assuming $\left\langle\begin{matrix} n \\ 1 \end{matrix} \right\rangle=1$ for $n\ge 0$ and $\left\langle\begin{matrix} 0 \\ k \end{matrix} \right\rangle=0$ for $k\ge 2$. Eulerian and Geometric polynomials are linked by the formula
\begin{equation}
\label{conecEulerianPolvsGeomPol}
P_n\left(\frac{1}{z-1}\right)=\frac{A_n(z)}{(z-1)^n},\quad n\ge 1,
\end{equation}

Our objective  is to study the asymptotic behaviour of Eulerian and Geometric polynomials.  For example, we state a result on the asymptotic behaviour of Geometric polynomials in the interval $(-1,0)$. The corresponding results for Eulerian polynomials and the asymptotic in the complex plane minus $[-1,0]$ will be formulated later. These theorems will be proved in Section \ref{SectProofMain-Result}.  
Since the zeros of the polynomials $(P_n(z))_{n\in\N}$ are at $(-1,0)$, the Geometric polynomials exhibit an oscillatory behaviour in the interval $(-1,0)$. The following result describes this behaviour.
To simplify writing for $z\in (-1,0)$, denote $\ell(z)=\log|\frac{1+z}{z}|$
and 
\[
J_n(z)=\frac{(\ell(z)+\pi i)^{n+1}+(\ell(z)-\pi i)^{n+1}}{((\ell(z))^2+\pi^2)^{(n+1)/2}},\quad n\in \N.
\]

\begin{theo}  \label{teoAsympIn}  For $c\in\big(0,\tfrac{\pi}{\sqrt{2}}\big)$, let
\begin{equation}
\label{DefSeq}
 L_n=c\sqrt{\frac{n}{\log n}},\quad z_n=-\frac{e^{L_n}}{1+e^{L_n}}, \quad n\ge 3.
\end{equation}
Then  
\[
\lim_{n\to\infty}\left(\frac{(1+z)P_n(z)((\ell(z))^2+\pi^2)^{(n+1)/2}}{n!}-J_n(z)\right)=0
\]
uniformly for $z\in[z_n,-1-z_n]$.
\end{theo}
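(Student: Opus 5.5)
The proof rests on an exact partial-fraction (Mittag-Leffler) expansion of $P_n(z)/n!$ over the poles of $f(z,\cdot)$. The two poles nearest the origin produce $J_n(z)$ exactly, and the remaining poles give a tail that is uniformly small on $[z_n,-1-z_n]$.

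\textbf{Step 1: poles and residues.} Fix $z\in(-1,0)$. The poles of $t\mapsto f(z,t)$ are the solutions of $e^t=(1+z)/z$. Since $(1+z)/z<0$, they are
\[
t_k=\ell(z)+(2k+1)\pi i,\qquad k\in\Z .
\]
Near $t_k$ we have $1-z(e^t-1)=-ze^{t_k}(t-t_k)+O((t-t_k)^2)=-(1+z)(t-t_k)+\dots$. Hence the residue of $f(z,t)t^{-n-1}$ at $t_k$ is $-\frac{1}{1+z}t_k^{-n-1}$.

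\textbf{Step 2: the exact series.} Write $P_n(z)/n!=\frac{1}{2\pi i}\oint f(z,t)t^{-n-1}\,\dd t$ around the origin. Then enlarge the contour to the square with vertices $\pm 2N\pi\pm 2N\pi i$.

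On the horizontal sides $e^t>0$, so $|1-z(e^t-1)|=1+z+|z|e^{\mathrm{Re}\, t}\ge 1+z>0$. On the vertical sides $|f|$ is bounded by a constant depending only on $z$, for $N$ large. So $f$ is bounded on the contours uniformly in $N$. For $n\ge1$ the contour integral is therefore $O(N^{-n})\to0$.

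This gives, for $n\ge1$,
\[
\frac{(1+z)P_n(z)}{n!}=\sum_{k\in\Z}\frac{1}{t_k^{\,n+1}},
\]
and the series converges absolutely since $|t_k|\asymp|k|$.

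\textbf{Step 3: the main term.} The terms $k=0$ and $k=-1$ give
\[
(\ell+\pi i)^{-n-1}+(\ell-\pi i)^{-n-1}=\frac{(\ell-\pi i)^{n+1}+(\ell+\pi i)^{n+1}}{(\ell^2+\pi^2)^{n+1}} .
\]
After multiplying by $(\ell^2+\pi^2)^{(n+1)/2}$ this is exactly $J_n(z)$.

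\textbf{Step 4: the tail.} Set $a=\ell^2+\pi^2$ and $m=(n+1)/2$. Grouping $k$ with $-1-k$, the remainder after the normalization is bounded by
\[
2\sum_{k\ge1}\Big(\frac{a}{a+((2k+1)^2-1)\pi^2}\Big)^{m}=:2\sum_{k\ge1}r_k^{m}.
\]
Since $r_k\le r_1$ and $r_k\le a/((2k+1)^2\pi^2)$, we get
\[
\sum_{k\ge1}r_k^{m}\le r_1^{m-1}\sum_{k\ge1}r_k\le C\,a\,r_1^{m-1},
\]
with $C$ an absolute constant.

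\textbf{Step 5: uniformity in $z$.} On $[z_n,-1-z_n]$ we have $|\ell(z)|\le L_n$, because $\ell(z_n)=-L_n$ and $\ell(-1-z)=-\ell(z)$. So $a\le L_n^2+\pi^2= c^2 n/\log n+\pi^2$. Moreover
\[
r_1^{m-1}=\Big(1-\frac{8\pi^2}{a+8\pi^2}\Big)^{m-1}\le \exp\Big(-\frac{8\pi^2(m-1)}{a+8\pi^2}\Big)=\exp\Big(-\frac{4\pi^2}{c^2}\log n\,(1+o(1))\Big)=n^{-4\pi^2/c^2+o(1)} .
\]
Hence the tail is at most
\[
C\,\frac{c^2 n}{\log n}\,n^{-4\pi^2/c^2+o(1)}\longrightarrow 0 .
\]
This uses $4\pi^2/c^2>1$, which holds with room to spare since $c<\pi/\sqrt2$ gives $4\pi^2/c^2>8$. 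Since the bound depends on $z$ only through $|\ell(z)|\le L_n$, the convergence is uniform, which proves the theorem.

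\textbf{Main obstacle.} The delicate point is Step 2: it needs the uniform boundedness of $f$ on the expanding contours, so that the Mittag-Leffler expansion holds with no extra entire part. The rest is the elementary estimate of Step 5. There the growth $L_n\sim c\sqrt{n/\log n}$ is exactly what turns the geometric decay of the first non-dominant pole into a power of $n$ that beats the prefactor $a$.
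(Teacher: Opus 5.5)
Your argument is correct, but it differs from the paper's at the central step.

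\textbf{The paper's route.} The paper subtracts only the principal parts at $\ell(z)\pm\pi i$, forming $h(z,t)$. It then applies a single Cauchy estimate on the circle $|t|=\sqrt{\ell(z)^2+2\pi^2}$. This needs Lemma~\ref{lemAcotaFuncSinSingularidad}, a bound $|h|\le C_1+C_2|\ell(z)|$ on that circle obtained from the partial fraction expansion of $\tanh$. It produces the factor $\bigl(\frac{\ell^2+\pi^2}{\ell^2+2\pi^2}\bigr)^{n/2}\approx n^{-\pi^2/(2c^2)}$. Set against a prefactor of order $L_n^2=c^2n/\log n$, this is exactly where the hypothesis $c<\pi/\sqrt2$ comes from.

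\textbf{Your route.} You let the contour go to infinity along squares whose horizontal sides lie on $\mathrm{Im}\,t\in 2\pi\Z$, where $e^t>0$ makes $f$ trivially bounded. This gives the exact expansion $(1+z)P_n(z)/n!=\sum_{k\in\Z}t_k^{-n-1}$, and you then bound the tail term by term.

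\textbf{What your route buys.} First, the decay is governed by the squared modulus $\ell(z)^2+9\pi^2$ of the next pair of poles. That gives $n^{-4\pi^2/c^2}$, so your proof covers every $c\in(0,2\pi)$, not just $c<\pi/\sqrt2$.

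Second, you never need a sup bound for $h$ on a circle that, for large $|\ell(z)|$, passes within distance $O(1/|\ell(z)|)$ of the poles $\ell(z)\pm 3\pi i$. This is the delicate point of the paper's route. The inequality $|u^2+(2m-1)^2\pi^2|\ge(2m-1)^2\pi^2$ used in the proof of that lemma does not hold on that circle. The $O(1+|\ell(z)|)$ conclusion can still be recovered by a finer estimate.

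\textbf{What the paper's route buys.} It is local: it uses only analyticity and a bound for the generating function in a disc slightly beyond the dominant poles. This is the same template as in Theorem~\ref{teoAsymp}. It also does not require checking that the Mittag-Leffler expansion holds with no entire part, which in your proof is the content of Step 2.
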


Taking a closer look at Theorem \ref{teoAsympIn} allows us to determine the distance between the zeros of Geometric polynomials. To achieve this, we number the zeros of $P_n(z)$ with respect to $-1/2$. The zeros of $P_n$ are denoted by $z_{j,n}$ for $j$ positive or negative, depending on whether they lie to the right or left of $z = -1/2$, starting from the zero nearest to $z = -1/2$ and taking into account that $z_{-j,n}+z_{j,n}=-1$. As $P_{2k}(-1/2)=0$ and $P_{2k+1}(-1/2)\ne 0$ we have to distinguish indexes even from odd. The following result then follows.

\begin{theo}\label{teoAproxZerosBulk}  We have
\begin{equation}
  z_{j,2n}=-\frac12 + \frac{\pi^2}{4}\,\frac{j}{n+1}  
  +\ o\!\Big(\frac{1}{n}\Big)\quad \text{as}\quad n\to\infty,
  \label{eq:bulkEven}
\end{equation}
for each $j=0, 1,\ldots$ While
\begin{equation}
  z_{j,2n+1}=-\frac12+ \frac{\pi^2}{8}\,\frac{2j-1}{n+1}  
  +\ o\!\Big(\frac{1}{n}\Big)\quad \text{as}\quad n\to\infty,
  \label{eq:bulkOdd}
\end{equation}
for each $j= 1, 2,\ldots$ For negative index the corresponding zeros are define by the symmetric relation $z_{-j,n}+z_{j,n}=-1$ . The distance of consecutive zeros of $P_n$ in the bulk\footnote{The bulk of the interval $(-1,0)$ is understood as any interval $(-1-a,a)$ with $a\in(-1/2,0)$ with $a$ fixed.} of $(-1,0)$ is of order $\frac{\pi^2}{4n}+o\!\Big(\frac{1}{n}\Big)$ as $n\to\infty$. 
\end{theo}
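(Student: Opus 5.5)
The plan is to read off the zeros from Theorem \ref{teoAsympIn}. For $z\in(-1,0)$ write $\ell(z)+\pi i=r(z)e^{i\theta(z)}$ with $r(z)=((\ell(z))^2+\pi^2)^{1/2}$ and $\theta(z)\in(0,\pi)$. Then
\[
J_n(z)=2\cos\big((n+1)\theta(z)\big),\qquad \theta(z)=\frac{\pi}{2}-\arctan\frac{\ell(z)}{\pi}.
\]
Theorem \ref{teoAsympIn} says that $F_n(z):=(1+z)P_n(z)r(z)^{n+1}/n!$ satisfies $F_n=J_n+o(1)$ uniformly on $[z_n,-1-z_n]$. Since $1+z$ and $r(z)$ do not vanish there, $F_n$ and $P_n$ have the same zeros on this interval. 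Near $z=-1/2$, set $z=-\tfrac12+h$. Then $\ell(z)=\log\frac{1/2+h}{1/2-h}=4h+O(h^3)$, so $\theta(z)=\frac{\pi}{2}-\frac{4h}{\pi}+O(h^3)$. The zeros of $J_n$ are the solutions of $(n+1)\theta=\frac{\pi}{2}+k\pi$.

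For the degree $N=n$, I solve this condition for $h$. If $N$ is even, I get $h=\frac{\pi^2 j}{4(N+1)}+O(j^3/N^3)$ with $j$ an integer, and $j=0$ gives $h=0$, matching $P_{2k}(-1/2)=0$. If $N$ is odd, I get half-integer shifts $h=\frac{\pi^2(2j-1)}{8(N+1)}+\cdots$. I will then rewrite these in the indexing of the statement. In both cases the spacing is $\frac{\pi^2}{4N}(1+o(1))$, which is the bulk claim. For fixed $j$ these points satisfy $h=O(1/N)$, so they lie well inside $[z_n,-1-z_n]$, because $z_n\to-1$. For the bulk claim one uses that any fixed $[-1-a,a]$ is eventually contained in $[z_n,-1-z_n]$. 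There $\ell$ is a smooth increasing function with $\ell'\ge c>0$, and the spacing of zeros of $\cos((n+1)\theta)$ is $\pi/((n+1)|\theta'|)$. This gives the local spacing in general, and specialises to $\pi^2/(4n)$ near $-1/2$, where $|\theta'|=\ell'/\pi\cdot(1+o(1))=4/\pi$.

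The transfer from zeros of $J_n$ to zeros of $P_n$ with error $o(1/n)$ goes as follows. Fix $\varepsilon>0$ and let $\zeta$ be a zero of $J_n$ as above. At $\zeta\pm\varepsilon/n$ we have $(n+1)\theta$ shifted by about $\mp 4\varepsilon/\pi$ from $\frac{\pi}{2}+k\pi$. Hence $J_n(\zeta\pm\varepsilon/n)$ have opposite signs and absolute value $\ge c(\varepsilon)>0$. Since $F_n-J_n=o(1)$ uniformly, for $n$ large $F_n$ has the same signs, so the intermediate value theorem gives a zero of $P_n$ in $(\zeta-\varepsilon/n,\zeta+\varepsilon/n)$. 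Outside these windows, but inside the region considered, $|J_n|\ge c(\varepsilon)$, so $F_n$ has no zeros there. Thus every zero of $P_n$ near $-1/2$ lies in one of these windows and every window contains one.

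The step I expect to be the main obstacle is showing that each window contains exactly one zero, which is needed so that the numbering $z_{j,n}$ from $-1/2$ matches $j$. The function-value estimate alone does not exclude several nearby zeros in one window. My first attempt is to use that all $n$ zeros of $P_n$ are real and simple in $(-1,0]$, which fixes their total count. I would combine this with the symmetry $z_{-j,n}+z_{j,n}=-1$ and with the asymptotic in $\mathbb{C}\setminus[-1,0]$ stated later. The idea is to upgrade the real estimate to convergence of derivatives, via Cauchy estimates on thin complex neighbourhoods or via a Rouché argument on small discs of radius $\varepsilon/n$ around each $\zeta$. Rouché on such discs would count the zeros exactly: $|J_n|$ is bounded below on the circle, and the approximation would have to hold uniformly there.
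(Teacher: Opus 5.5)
Your existence step is sound and needs less than the paper's. The $o(1)$ error of Theorem~\ref{teoAsympIn} with windows of radius $\varepsilon/n$ already gives two things: a zero of $P_n$ within $o(1/n)$ of each bulk zero of $J_n$, and no zeros outside the windows. The paper instead uses Theorem~\ref{teoAsympIn2} with $\delta=1$ and Lemma~\ref{levelSetJ_n} to get $O(1/n^2)$. Your formulas in terms of the degree $N$ are correct and match the paper's proof, where $n$ is also the degree. If $2n$ in the statement is read literally as the degree, the displayed formulas are off by a factor $2$.

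The genuine gap is the one you flag yourself: nothing shows that a window contains \emph{exactly} one zero. Without that, the label $z_{j,n}$ cannot be matched with $j$. Neither of your suggested remedies works as stated.
\begin{itemize}
\item The total count gives no control. Theorem~\ref{teoAsympIn} only covers $[z_n,-1-z_n]$, and $J_n$ has about $2n/L_n\asymp\sqrt{n\log n}$ zeros outside that interval. So the $n-1$ zeros of $P_n$ could pile up in some bulk windows while being absent near the endpoints.
\item Rouch\'e or Cauchy estimates on discs of radius $\varepsilon/n$ about real points would need $F_n\approx J_n$, for a suitable analytic continuation of $J_n$, on complex discs that meet $[-1,0]$. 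Theorem~\ref{teoAsympIn} concerns only the real segment, and Theorem~\ref{teoAsymp} only compact sets away from $[-1,0]$. This route therefore requires a new contour estimate for complex $z$.
\end{itemize}

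The paper closes this step with the derivative asymptotics of Theorem~\ref{teoAsympDer}:
\[
z(1+z)^2P_n'(z)((\ell(z))^2+\pi^2)^{(n+2)/2}/(n!\,n)=J_{n+1}(z)+o(1/n).
\]
At a zero of $J_n$ one has $J_{n+1}=\pm 2\sin\theta(z)$, which is bounded away from $0$ in the bulk. Hence $P_n'$ keeps a fixed sign on an $\varepsilon/n$-neighbourhood of each bulk zero of $J_n$, $P_n$ is strictly monotone there, and the zero is unique.

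You can also close the gap with tools already in your proposal.
\begin{itemize}
\item By Lemma~\ref{lemZer}, the nonzero zeros of $P_n$ and $P_{n-1}$ strictly interlace. So two zeros of $P_n$ in one window would force a zero of $P_{n-1}$ in that window.
\item On such a window write $(n+1)\theta=\frac{\pi}{2}+k\pi+\phi$ with $|\phi|\le C\varepsilon$. Then $n\theta=\frac{\pi}{2}+k\pi+\phi-\theta$, so $|J_{n-1}|=2|\sin(\theta-\phi)|\ge 2\sin(\eta/2)$ whenever $\theta\in[\eta,\pi-\eta]$ (the bulk) and $C\varepsilon<\eta/2$.
\item Theorem~\ref{teoAsympIn} applied at index $n-1$ then shows that $P_{n-1}$ has no zero in the window, which rules out a second zero of $P_n$ there.
\end{itemize}
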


The above theorem will be proved in Section \ref{sec:Zeros} together with the corresponding for Eulerian polynomials. In Section \ref{sec:algebraic} some algebraic results for these polynomials are stated.

\section{Algebraic relations}\label{sec:algebraic}
In this section we set some algebraic properties of Geometric and Eulerian polynomials. The polynomial $P_n$ has exactly degree $n$, integers coefficients  and its leading coefficient is $n!$, $P_n(0)=0$, $P_n'(0)=1$ for all $n\ge 1$, and $P_n(-1)=(-1)^n.$ Equation \eqref{mainEquation} implies $1=(1-z\sum_{k=1}^\infty\frac{t^k}{k!})\sum_{k=0}^\infty\frac{P_k(z)t^k}{k!}$, so  we have $P_0(x)=1$ and
\begin{equation}
\label{Recurrencia}
P_n(z)=z\sum_{k=0}^{n-1}\binom{n}{k}P_k(z),\quad n=1,2,\ldots
\end{equation}
The first seven Geometric polynomials are given by
\[
P_0(x)=1,\quad P_1(x)=x,\quad P_2(x)=2x^2 + x,
\]
\[
P_3(x)=6x^3 + 6x^2 + x,\quad P_4(x)=24x^4 + 36x^3 + 14x^2 + x,
\]
\[
P_5(x)=120x^5 + 240x^4 + 150x^3 + 30x^2 + x,
\]
\[
P_6(x)= 720x^6 + 1800x^5 + 1560x^4 + 540x^3 + 62x^2 + x.
\]
In Figure \ref{figura} we show the graphics of polynomials $P_n(x)$ for $n=0,1,\ldots,6$. These graphs lead us to suspect that the roots of the polynomials are simple and lie within the interval $(-1,0]$. The next result states the location of the roots of $P_n$.
\begin{figure}
\centering
\includegraphics[width=0.6\textwidth]{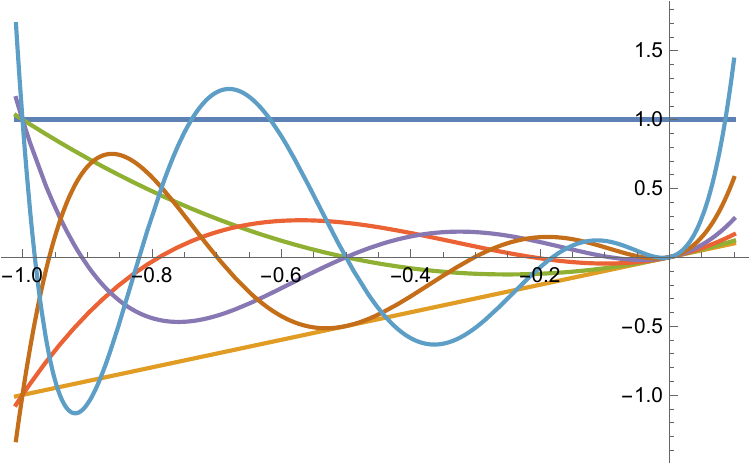}
 \caption{Graphics of $P_n(z)$  for $n=0,1,\ldots,6$ in the interval $[-1.01,0.1]$.}\label{figura}
 \end{figure}

\begin{lemma}\label{lemZer} 
\begin{enumerate}
\item
The following formula holds
\begin{equation}
\label{delDifEc}
P_n(z)=z\frac{d((1+z)P_{n-1}(z))}{dz}=z\,P_{n-1}(z)+z(1+z)\,P_{n-1}'(z),
\end{equation}
and $P_n$ has simple zeros in $(-1,0]$. Moreover the zeros of $P_n$ and $P_{n-1}$  different from $z=0$ interlace.

\item We have
\begin{equation}
\label{simtRel}
(1+z)P_n(z)=(-1)^nzP_n(-1-z),\quad n\ge 1.
\end{equation}
The zeros of $P_n$ different from $z=0$ are symmetric with respect to $z=-1/2$. If $n$ is even, $P_n$ has  a zero at $z=-1/2$. In addition
\begin{equation}
\label{RelPolDer}
(1+z)P_n'(z)+(-1)^nzP_n'(-1-z)=\frac{P_n(z)}{z}.	
\end{equation}
If $P_n(z)\ne 0$ and $z\ne -1$, it holds
\begin{equation}
\label{RelPolDer2}
\frac{P_n'(z)}{P_n(z)}+\frac{P_n'(-1-z)}{P_n(-1-z)}=\frac{1}{z(1+z)}.
\end{equation}
\end{enumerate}
\end{lemma}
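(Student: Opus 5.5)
For the differential relation \eqref{delDifEc} I would work directly with the generating function \eqref{mainEquation}. With $f(z,t)=(1-z(e^t-1))^{-1}$ one computes
\[
\partial_t f = z e^t f^2, \qquad \partial_z\big((1+z)f\big)=f+(1+z)(e^t-1)f^2 .
\]
Writing $1-(1+z)(e^t-1)+\ldots$ is not needed; instead I would simplify with the identity $1/f = 1-z(e^t-1)$. This gives
\[
f+(1+z)(e^t-1)f^2=f^2\big(1-z(e^t-1)+(1+z)(e^t-1)\big)=e^t f^2 .
\]
Hence $\partial_t f = z\,\partial_z((1+z)f)$. Comparing coefficients of $t^{n-1}/(n-1)!$ yields $P_n=z\,\frac{d}{dz}((1+z)P_{n-1})$.

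For the zeros I would argue by induction, with the stronger hypothesis that $P_{n-1}(z)=z\,Q_{n-1}(z)$ and that $Q_{n-1}$ has $n-2$ simple zeros in $(-1,0)$. Let $g(z)=(1+z)P_{n-1}(z)$. It has simple zeros at $-1$, at $0$, and at the $n-2$ interior zeros, so $n$ distinct real zeros in total. By Rolle, $g'$ has a zero strictly between each consecutive pair, which gives $n-1$ zeros in $(-1,0)$. Since $\deg g'=n-1$, these are all of its zeros and they are simple. Then $P_n=z g'$ has simple zeros at $0$ and at these $n-1$ points. The Rolle placement is exactly the interlacing with the nonzero zeros of $P_{n-1}$. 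The base case $P_1=z$ is immediate.

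For the symmetry \eqref{simtRel} I would again use generating functions. Replacing $z$ by $-1-z$ and $t$ by $-t$ gives
\[
1+(1+z)(e^{-t}-1)=e^{-t}\big(1-z(e^t-1)\big)\cdot\text{(check)} .
\]
Indeed $e^{-t}(1+z)-z$ times $e^t$ equals $1+z-ze^t=1-z(e^t-1)$. Thus $f(-1-z,-t)=e^t f(z,t)$.

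It is cleaner, though, to take the route that uses $(1+z)f$:
\[
z\,f(-1-z,-t)=\frac{z\,e^t}{1-z(e^t-1)}=z\,e^t f(z,t), \qquad z e^t f = \frac{\partial_t f}{f}\cdot\frac{1}{f}\cdots
\]
Rather than chase that, I would verify \eqref{simtRel} by induction using \eqref{delDifEc}. Set $R_n(z)=(-1)^n z P_n(-1-z)$. Applying \eqref{delDifEc} at the point $w=-1-z$ gives
\[
P_n(w)=w\,\frac{d}{dw}\big((1+w)P_{n-1}(w)\big)=(1+z)\frac{d}{dz}\big(-z P_{n-1}(-1-z)\big)\cdot(-1)\cdot(-1)
\]
(the chain rule supplies $d/dw=-d/dz$). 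Substituting the inductive hypothesis $zP_{n-1}(-1-z)=(-1)^{n-1}(1+z)P_{n-1}(z)$ gives $(-1)^nzP_n(-1-z)=z(1+z)\frac{d}{dz}((1+z)P_{n-1})=(1+z)P_n(z)$. The sign bookkeeping here is the only delicate point, and the base case $n=1$ checks directly: $(1+z)z=(-1)z(-1-z)$.

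The remaining claims then follow quickly. Symmetry of the nonzero zeros is read off from \eqref{simtRel}. For even $n$, the polynomial $P_n$ has $n-1$ nonzero zeros, an odd number of points symmetric about $-1/2$, so one of them must be $-1/2$. Differentiating \eqref{simtRel} in $z$ gives
\[
P_n+(1+z)P_n'(z)=(-1)^n\big(P_n(-1-z)-zP_n'(-1-z)\big).
\]
Next I would use \eqref{simtRel} to replace $(-1)^nP_n(-1-z)=(1+z)P_n(z)/z$; the left side's $P_n$ then combines with this to leave $P_n/z$. Rearranging gives \eqref{RelPolDer}. Finally, dividing \eqref{RelPolDer} by $(1+z)P_n(z)$ and using \eqref{simtRel} once more to write $(-1)^nz/((1+z)P_n(z))=1/P_n(-1-z)$ yields \eqref{RelPolDer2}.
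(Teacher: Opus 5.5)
Your proof is correct, but for two of the identities it takes a different route from the paper.

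\textbf{The recurrence \eqref{delDifEc}.} The paper works with coefficients: it feeds the Stirling recurrence $\stirling{n+1}{k}=k\stirling{n}{k}+\stirling{n}{k-1}$ into \eqref{eqPolStirling} and recognizes $zP_{n-1}'+z(zP_{n-1})'$. You instead derive the single identity $\partial_t f=z\,\partial_z((1+z)f)$ directly from \eqref{mainEquation}. That is self-contained and needs no combinatorial input.

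\textbf{The symmetry \eqref{simtRel}.} The paper expands both sides of $f(-1-z,-t)=e^tf(z,t)$ in $t$, obtaining $(-1)^nP_n(-1-z)=\sum_{k=0}^{n}\binom{n}{k}P_k(z)$. Then \eqref{Recurrencia} collapses the sum to $P_n(z)+P_n(z)/z$. You verify that same functional equation but set it aside, and instead prove \eqref{simtRel} by induction from \eqref{delDifEc} via the chain rule; your sign bookkeeping is right. Your route makes the whole lemma rest on one differential recurrence plus $P_1=z$. The paper's route is a one-line coefficient comparison that also yields the binomial identity above as a by-product; you were one expansion away from it.

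\textbf{The rest.} The Rolle--induction argument for the zeros and the derivations of \eqref{RelPolDer} and \eqref{RelPolDer2} are the paper's, with your zero count made more explicit than there.

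Three small points. For even $n$, putting $z=-1/2$ in \eqref{simtRel} gives $P_n(-1/2)=-P_n(-1/2)$ directly, so no parity count is needed. In the last step, say that $P_n(z)\neq0$ and $z\neq-1$ force $z\neq0$ (since $P_n(0)=0$) and, by \eqref{simtRel}, $P_n(-1-z)\neq0$; this justifies the division. Finally, remove the abandoned scratch lines (the ``(check)'' display and the unfinished $ze^tf=\dots$ line).
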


\begin{proof} 1. As
\[
\stirling{n+1}{k}=k\stirling{n}{k}+\stirling{n}{k-1},
\]
from \eqref{eqPolStirling} we obtain
\[
P_n(z)=\sum_{k=1}^{n-1}k!k\stirling{n-1}{k}z^k+\sum_{k=2}^{n}k!\stirling{n-1}{k-1}z^k\\=z\frac{dP_{n-1}(z)}{dz}+z\frac{d(zP_{n-1}(z))}{dz},
\]
and \eqref{delDifEc} follows.

The statement about the zeros of $P_1(z)=z$ is trivial. From \eqref{delDifEc} and since $(1+z)P_{n-1}(z)$ is zero at $z=-1$ and at the zeros of $P_{n-1}(z)$ by Rolle's theorem and induction the statements on the zeros follow immediately. 

2. Because of
\[
f(-1-z,-t)=e^{t}f(z,t),
\]
doing the Taylor polynomial in $t=0$ of each function in the  above equality and from \eqref{Recurrencia} we get \eqref{simtRel}. The statement about the symmetric relation for the zeros of $P_n$ follows from \eqref{simtRel}. 
Taking derivatives in \eqref{simtRel} we get \eqref{RelPolDer}. The relation \eqref{RelPolDer2} follows immediately from \eqref{simtRel} and \eqref{RelPolDer}.
\end{proof}

\begin{rem} Formula \eqref{delDifEc} appears in \cite{Dil_Kur}. The simplicity and interlace of the zeros of consecutive Eulerian polynomials is well known \cite{SavVis} and it can be deduced from the previous lemma and \eqref{conecEulerianPolvsGeomPol}.

\end{rem}

The symmetry property \eqref{simtRel} of the polynomials $P_n$ with respect to $-1/2$ implies several orthogonality relations for the Geometric polynomials.

\begin{lemma}  \label{lemOrt1} 
\begin{enumerate}
\item If $m$ and $n$ are positive integers of different parity we have
\[
\int_{-1}^0\left(\frac{P_{2j}(x)}{x}\right)^m\left(\frac{P_{2k}(x)}{x}\right)^n\, \dd x=0,\quad j,k\in\N,\, j\ne k.
\]
\item It holds
\[
\int_{-1}^0\frac{P_{2j}(x)}{x}\frac{P_{2k-1}(x)}{x}\, \dd x=0,\quad k,j\ge 1.
\]
\item If $n\ge 2$, we have
\begin{equation}
\label{ort1}
\int_{-1}^0\frac{P_n(x)}{x}\dd x=0.
\end{equation}
\end{enumerate} 
\end{lemma}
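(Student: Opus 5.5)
The plan is to reduce everything to the parity of the polynomials $Q_n(x)=P_n(x)/x$ under the reflection $x\mapsto -1-x$. Here $Q_n$ is a genuine polynomial for $n\ge 1$ because $P_n(0)=0$, so all the integrals are integrals of polynomials over $[-1,0]$ and there are no convergence issues. The reflection fixes $-1/2$ and maps $[-1,0]$ onto itself. Hence the integral over $[-1,0]$ of any polynomial $R$ with $R(-1-x)=-R(x)$ vanishes, by the substitution $x\mapsto -1-x$.

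First I will derive the parity of $Q_n$ from \eqref{simtRel}. Replacing $z$ by $-1-z$ there gives $-zP_n(-1-z)=(-1)^n(-1-z)P_n(z)$, that is,
\[
P_n(-1-z)=(-1)^n\frac{(1+z)P_n(z)}{z}.
\]
Dividing by $-1-z$ then yields
\[
Q_n(-1-z)=(-1)^{n+1}Q_n(z).
\]
So $Q_{2j}$ is odd with respect to $-1/2$, and $Q_{2k-1}$ is even with respect to $-1/2$.

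For item 1, the integrand $Q_{2j}^mQ_{2k}^n$ picks up the factor $(-1)^{m}(-1)^{n}=-1$ under the reflection, since $m+n$ is odd. It is therefore odd about $-1/2$ and its integral vanishes; the hypothesis $j\ne k$ is not even needed. For item 2, the integrand $Q_{2j}Q_{2k-1}$ is a product of an odd and an even function, so it is odd and the integral is $0$.

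For item 3, parity alone only handles even $n$, since for odd $n$ the function $Q_n$ is even. I will therefore use \eqref{delDifEc} instead, which covers all $n\ge 2$ at once. Dividing \eqref{delDifEc} by $x$ gives $Q_n(x)=\frac{d}{dx}\big((1+x)P_{n-1}(x)\big)$, so
\[
\int_{-1}^0 Q_n(x)\,\dd x=\Big[(1+x)P_{n-1}(x)\Big]_{-1}^{0}=P_{n-1}(0)-0=0,
\]
because $P_{n-1}(0)=0$ for $n-1\ge 1$. This step is where the hypothesis $n\ge 2$ enters; for $n=1$ the integral is $\int_{-1}^0 1\,\dd x=1\neq 0$. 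There is no real obstacle in any of these steps; the only point requiring care is that item 3 for odd $n$ cannot come from symmetry and needs \eqref{delDifEc}.
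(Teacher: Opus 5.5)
Your proof is correct and follows the paper's argument. Items 1 and 2 come from the reflection symmetry \eqref{simtRel}, which you make explicit as $Q_n(-1-x)=(-1)^{n+1}Q_n(x)$, and item 3 comes from integrating \eqref{delDifEc} divided by $x$; you are also right that the hypothesis $j\ne k$ in item 1 is superfluous.
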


\begin{proof} The first two statements follow from the symmetry property of the polynomials $P_n$ with respect to $-1/2$ given by \eqref{simtRel}.

From \eqref{delDifEc} we have
\[
\int_{-1}^0\frac{P_n(x)}{x}\dd x=\left.(1+x)P_{n-1}(x)\right|_{x=0}-\left.(1+x)P_{n-1}(x)\right|_{x=-1}=0,\quad n\ge 2.\qedhere
\]
\end{proof}

\begin{rem} Statement 3 of Lemma \ref{lemOrt1} can be also proved from the relation
$f(z,t)-1=z(1-e^t)f(z,t).$
\end{rem}

Let $(z_{k,n}^*)$ denote the $n-1$ zeros of $P_n$ different from zero numbered according to their increasing order.

\begin{lemma}\label{LemCocienteCons} There exist $n-1$ positive numbers $\lambda_{k,n}$ such that 
\begin{equation}
\label{descFracSimples}
n\frac{P_{n-1}(z)}{P_n(z)}=\sum_{k=1}^{n-1}\frac{\lambda_{k,n}}{z-z_{k,n}^*}
\end{equation}
and $\sum_{k=1}^{n-1}\lambda_{k,n-1}=1$. Moreover the sequence of functions $\left(n\frac{P_{n-1}(z)}{P_n(z)}\right)_{n\in\mathbb{N}}$ is uniformly bounded in compact subsets of  $\mathbb{C}\setminus[-1,0]$; i.e. if $K\subset \overline{\mathbb{C}}\setminus[-1,0]$ is a compact set, then there exists a constant $C=C(K)>0$ such that for all  $z\in K$ we have
\[
\left|n\frac{P_{n-1}(z)}{P_n(z)}\right|\le C.
\]

\end{lemma}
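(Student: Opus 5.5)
The plan is to reduce \eqref{descFracSimples} to a partial fraction decomposition of a rational function with simple real poles, and to get positivity of the residues from the interlacing in Lemma \ref{lemZer}. Set $Q_n(z)=P_n(z)/z$ for $n\ge 1$. Since $P_n(0)=0$ and $P_n'(0)=1$, $Q_n$ is a polynomial of degree $n-1$ with $Q_n(0)=1$ and leading coefficient $n!$. By Lemma \ref{lemZer}, its zeros are exactly the points $z_{k,n}^*$, $k=1,\dots,n-1$. They are simple, lie in $(-1,0)$, and interlace with the $n-2$ zeros of $Q_{n-1}$. The factor $z$ cancels, so
\[
n\frac{P_{n-1}(z)}{P_n(z)}=n\frac{Q_{n-1}(z)}{Q_n(z)} .
\]
This is a proper rational function: the numerator has degree $n-2$ and the denominator degree $n-1$. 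Its poles are simple and sit at the $z_{k,n}^*$, so it decomposes as in \eqref{descFracSimples} with
\[
\lambda_{k,n}=n\frac{Q_{n-1}(z_{k,n}^*)}{Q_n'(z_{k,n}^*)} .
\]
Comparing behaviour at infinity, $nQ_{n-1}(z)/Q_n(z)\sim n(n-1)!\,z^{n-2}/(n!\,z^{n-1})=1/z$. Hence $\sum_k\lambda_{k,n}=1$, which is the normalization in the statement (I read the index $n-1$ there as $n$).

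Positivity of the $\lambda_{k,n}$ is the only step needing care. At the largest zero $z_{n-1,n}^*$ we have $Q_n'>0$, because $Q_n$ has positive leading coefficient and no zeros to the right of this point. By interlacing, all zeros of $Q_{n-1}$ lie to the left of $z_{n-1,n}^*$, and $Q_{n-1}$ has positive leading coefficient, so $Q_{n-1}(z_{n-1,n}^*)>0$. Moving leftwards from one zero of $Q_n$ to the next:
\begin{itemize}
\item $Q_n'$ changes sign, because the zeros of $Q_n$ are simple;
\item $Q_{n-1}$ also changes sign, because exactly one simple zero of $Q_{n-1}$ lies strictly between consecutive zeros of $Q_n$.
\end{itemize}
So the ratio $Q_{n-1}/Q_n'$ keeps the sign it has at $z_{n-1,n}^*$, namely positive, and every $\lambda_{k,n}>0$. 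The case $n=2$ is a direct check: $2P_1/P_2=1/(z+1/2)$.

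For the uniform bound, positivity together with $\sum_k\lambda_{k,n}=1$ gives, for every $z\notin[-1,0]$,
\[
\left|n\frac{P_{n-1}(z)}{P_n(z)}\right|\le \sum_{k=1}^{n-1}\frac{\lambda_{k,n}}{|z-z_{k,n}^*|}\le \frac{1}{\operatorname{dist}(z,[-1,0])}.
\]
Let $K$ be a compact subset of $\overline{\C}\setminus[-1,0]$. On $K\cap\C$ the distance to $[-1,0]$ is bounded below by a positive constant on any bounded part. Near $\infty$ we use $\operatorname{dist}(z,[-1,0])\ge |z|-1$, so the right-hand side is even small there, and the rational functions are $0$ at $\infty$. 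This gives a constant $C(K)$ independent of $n$. No step looks like a genuine obstacle; the only delicate point is the sign bookkeeping for the residues, which is settled by the interlacing from Lemma \ref{lemZer}.
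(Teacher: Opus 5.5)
Your proof is correct and follows essentially the same route as the paper: pass to $Q_n=P_n/z$, expand $nQ_{n-1}/Q_n$ in partial fractions (the paper phrases this as Lagrange interpolation) with $\lambda_{k,n}=nQ_{n-1}(z_{k,n}^*)/Q_n'(z_{k,n}^*)$, obtain $\sum_k\lambda_{k,n}=1$ from the behaviour at infinity, get positivity from the interlacing in Lemma~\ref{lemZer}, and bound by $1/\operatorname{dist}(z,[-1,0])$. Your sign bookkeeping, starting from the largest zero, is a cleaner version of the paper's argument, which has a typo ($Q_n'Q_n$ where $Q_n'Q_{n-1}$ is meant). You also explicitly handle the point at infinity in $\overline{\C}\setminus[-1,0]$, which the paper's proof passes over.
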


\begin{proof} According to  Lemma \ref{lemZer} the zeros of $P_n$ are simple and lie in $(-1,0]$. Let
\[
Q_k(z)=\frac{P_k(z)}{z},\quad k=1,2,\ldots
\]
The zeros of $Q_n$ are the $n-1$ numbers $(z_{k,n}^*)$. By the Lagrange  interpolation formula we have
\[
nQ_{n-1}(z)=\sum_{k=1}^{n-1}nQ_{n-1}(z_{k,n}^*)\frac{Q_n(z)}{Q_n'(z_{k,n}^*)(z-z_{k,n}^*)}.
\]
Thus, the equality \eqref{descFracSimples} follows with
\[
\lambda_{k,n}=\frac{nQ_{n-1}(z_{k,n}^*)}{Q_n'(z_{k,n}^*)}.
\]
The polynomials $Q_n$ y $Q_{n-1}$ have degree exactly $n-1$ and $n-2$, respectively, with leading coefficients $n!$ and $(n-1)!$ Multiplying  \eqref{descFracSimples} by $z$ and taking limits as $z\to\infty$, we obtain
\[
\sum_{k=1}^{n-1}\lambda_{k,n}=1.
\]
According to  Lemma \ref{lemZer}, the zeros of $Q_n$ and $Q_{n-1}$  interlace in $(-1,0)$. From Rolle's theorem $Q_n$ and $Q_n'$ have also interlacing zeros, so 
\[
Q_n'(z_{k,n}^*)Q_n'(z_{k-1,n}^*)<0,
\]
the same happen with $Q_{n-1}$. As $Q_{n-1}$ and $Q_{n}$ have positive leading coefficient, we have
$Q_n'(z_{k,n}^*)Q_n(z_{k,n}^*)>0$ and hence $\lambda_{k,n}>0$, $k=1,2,\ldots,n-1.$

If $K\subset \mathbb{C}\setminus [-1,0]$ is a compact set, 
\[
d(K,[-1,0])=\inf\{|z-x|; x\in[-1,0],\, z\in K\}>0.
\]
From \eqref{descFracSimples} it follows
\[
\left|n\frac{P_{n-1}(z)}{P_n(z)}\right|\le \frac{1}{d(K,[-1,0])}.\qedhere
\]

\end{proof}

Next we translate the above results to Eulerian polynomials. It is known (see, for example, \cite[p. 51]{Com}, \cite{Foa}, \cite{Lus} and \cite{Qi} and the references therein) the exponential generating function relation
  \begin{equation}\label{eq:eulerianGF}
  \sum_{n=0}^{\infty} A_n(x)\,\frac{t^n}{n!} = \frac{1-x}{e^{t(x-1)}-x},\qquad x\ne 1,
  \end{equation}
the recurrence-differential relation, $A_0(x)=1$,
\[
  A_{n+1}(x) = (1+n x)\,A_n(x) + x(1-x)\,A_n'(x),\quad n=0,1,\ldots,
\]
and the palindromic relation
\begin{equation}\label{eq:pal}
 x^{n-1}\,A_n(1/x)=A_n(x),\quad n\ge 1, \quad x\ne 0.
\end{equation}
The first seven Eulerian Polynomails are
\[
A_0(x)=1,\quad A_1(x)=1,\quad A_2(x)=1+x,\quad A_3(x)=1 + 4 x+ x^2,
\]
\[
A_4(x)=1 + 11 x + 11 x^2 + x^3, \quad A_5(x)=1 + 26 z + 66 x^2 + 26 x^3 + x^4,
\]
and
\[
A_6(x)=1 + 57 x + 302 x^2 + 302 x^3 + 57 x^4 + x^5.
\]
Note that we have a shift in the degree (see \cite[pp. 485-486]{Eul}). Differentiating \eqref{eq:pal} yields identities which are the exact analogues of the derivative relations for $P_n$.
\begin{lemma}\label{lem:derivatives}
For $x\ne 0$,
\begin{equation}
 x\,A_n'(x) + x^{n-2}\,A_n'(1/x) = (n-1)\,A_n(x),\label{eq:der-one}
 \end{equation}
 and
 \begin{equation}
 \frac{A_n'(x)}{A_n(x)} + \frac{1}{x^2}\,\frac{A_n'(1/x)}{A_n(1/x)} = \frac{n-1}{x},\label{eq:der-two}
\end{equation}
where in the last relation we have to assume that $A_n(x)\ne 0$.
\end{lemma}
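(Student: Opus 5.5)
We will differentiate the palindromic relation \eqref{eq:pal}, written as $A_n(x)=x^{n-1}A_n(1/x)$, with respect to $x$. By the product and chain rules,
\[
A_n'(x)=(n-1)x^{n-2}A_n(1/x)-x^{n-1}\cdot\frac{1}{x^2}A_n'(1/x)=(n-1)x^{n-2}A_n(1/x)-x^{n-3}A_n'(1/x).
\]
Multiplying by $x$ gives
\[
xA_n'(x)=(n-1)x^{n-1}A_n(1/x)-x^{n-2}A_n'(1/x).
\]
Using \eqref{eq:pal} once more to replace $x^{n-1}A_n(1/x)$ by $A_n(x)$ turns this into
\[
xA_n'(x)+x^{n-2}A_n'(1/x)=(n-1)A_n(x),
\]
which is \eqref{eq:der-one}. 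The relation holds for every $x\ne 0$, since \eqref{eq:pal} holds there and both sides are rational functions that are smooth away from $0$.

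For \eqref{eq:der-two}, assume $A_n(x)\ne 0$; then $A_n(1/x)=x^{1-n}A_n(x)\ne 0$ as well. Divide \eqref{eq:der-one} by $xA_n(x)$:
\[
\frac{A_n'(x)}{A_n(x)}+\frac{x^{n-3}A_n'(1/x)}{A_n(x)}=\frac{n-1}{x}.
\]
In the second term substitute $A_n(x)=x^{n-1}A_n(1/x)$. This gives
\[
\frac{x^{n-3}A_n'(1/x)}{x^{n-1}A_n(1/x)}=\frac{1}{x^2}\,\frac{A_n'(1/x)}{A_n(1/x)},
\]
which yields \eqref{eq:der-two}.

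There is no real obstacle here. The only points requiring care are the chain-rule factor $-1/x^2$ and the bookkeeping of the powers of $x$. As a sanity check, for $n=2$, where $A_2=1+x$, the left side of \eqref{eq:der-one} is $x+1$ and the right side is $1\cdot(1+x)$, so the identity holds.
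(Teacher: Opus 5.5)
Your proof is correct and follows the paper's approach: the paper says only that differentiating the palindromic relation \eqref{eq:pal} gives both identities. Your computation supplies exactly those details, namely the chain-rule factor $-1/x^2$, the substitution back via \eqref{eq:pal}, and the division by $xA_n(x)$ using $A_n(1/x)=x^{1-n}A_n(x)\neq 0$.
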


Next we stated the corresponding orthogonality relation for Eulerian polynomials. From \eqref{conecEulerianPolvsGeomPol} and using the change of variables
\begin{equation}\label{eq:xu}
 x=1+\frac{1}{u},\quad\quad u=\frac{1}{x-1}, \qquad \dd u = -\frac{\dd x}{(x-1)^2},
\end{equation}
for positive integers $r_\ell$ and nonnegative integer exponents $\alpha_\ell$, $1\le \ell\le p$, we obtain
\begin{align}
 \int_{-1}^0 \prod_{\ell=1}^p \left(\frac{P_{r_\ell}(u)}{u}\right)^{\alpha_\ell}\,\dd u
 &= \int_{-\infty}^{0} \frac{\prod_{\ell=1}^p A_{r_\ell}(x)^{\alpha_\ell}}{(1-x)^{E}}\,\dd x,
 \qquad \label{eq:transfer}
\end{align}
where $E:=2+\sum_{\ell=1}^p \alpha_\ell(r_\ell-1).$ We use $(x-1)^{-(\cdot)}=(\pm1)(1-x)^{-(\cdot)}$ and absorb the sign, irrelevant to vanishing. Hence Lemma \ref{lemOrt1} is written
\begin{lemma}\label{lem:Euler-analogue}
\hfill
\begin{enumerate}
  \item If $m$ and $n$ are positive integers of different parity and $j\ne k$ we have 
  \begin{equation}\label{eq:analogue1}
  \int_{-\infty}^{0} \frac{A_{2j}(x)^m\,A_{2k}(x)^n}{(1-x)^{\,2+m(2j-1)+n(2k-1)}}\,\dd x=0.
  \end{equation}

  \item For $j,k\ge1$,
  \begin{equation}\label{eq:analogue2}
  \int_{-\infty}^{0} \frac{A_{2j}(x)\,A_{2k-1}(x)}{(1-x)^{\,2+(2j-1)+(2k-2)}}\,\dd x=0.
  \end{equation}

  \item For $n\ge2$,
  \begin{equation}\label{eq:analogue3}
  \int_{-\infty}^{0} \frac{A_n(x)}{(1-x)^{\,n+1}}\,\dd x=0.
  \end{equation}
\end{enumerate}
\end{lemma}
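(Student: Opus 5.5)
The statement to prove is Lemma \ref{lem:Euler-analogue}. The plan is to derive each item from the matching item of Lemma \ref{lemOrt1} through the transfer identity \eqref{eq:transfer}. The real work is to check the change of variables \eqref{eq:xu} carefully: the exponent $E$, the sign, and the convergence at $x\to-\infty$.

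First I will verify \eqref{eq:transfer} directly. Put $u=1/(x-1)$. Then $x\in(-\infty,0)$ corresponds to $u\in(-1,0)$: at $x=0$ we get $u=-1$, and as $x\to-\infty$ we get $u\to0^-$. The map is monotone, so $\dd u=-\dd x/(x-1)^2$ reverses orientation. By \eqref{conecEulerianPolvsGeomPol},
\[
\frac{P_r(u)}{u}=(x-1)\frac{A_r(x)}{(x-1)^r}=\frac{A_r(x)}{(x-1)^{r-1}}.
\]
Hence the integrand $\prod_\ell (P_{r_\ell}(u)/u)^{\alpha_\ell}\,\dd u$ becomes
\[
\frac{\prod_\ell A_{r_\ell}(x)^{\alpha_\ell}}{(x-1)^{E-2}}\cdot\frac{\dd x}{(x-1)^2}
\]
up to a global sign, with $E=2+\sum\alpha_\ell(r_\ell-1)$. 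Replacing $(x-1)^{-E}$ by $(1-x)^{-E}$ introduces only the factor $(-1)^E$, which does not affect whether the integral vanishes.

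Next I need the $x$-integrals to converge. The left-hand integrand is a polynomial on $[-1,0]$, so that side is finite. On the right, $\deg A_r=r-1$, so the integrand behaves like $|x|^{\sum\alpha_\ell(r_\ell-1)-E}=|x|^{-2}$ as $x\to-\infty$. This gives absolute convergence, so the substitution is legitimate as an improper integral.

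Then I specialize. For item 1, take $p=2$, $r_1=2j$, $\alpha_1=m$, $r_2=2k$, $\alpha_2=n$, which gives $E=2+m(2j-1)+n(2k-1)$; part 1 of Lemma \ref{lemOrt1} then yields \eqref{eq:analogue1}. For item 2, take $r_1=2j$, $r_2=2k-1$ with both exponents equal to $1$, so $E=2+(2j-1)+(2k-2)$; part 2 yields \eqref{eq:analogue2}. For item 3, take $p=1$, $r_1=n$, $\alpha_1=1$, so $E=n+1$; \eqref{ort1} yields \eqref{eq:analogue3}.

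The main obstacle is not this transfer but the validity of parts 1 and 2 of Lemma \ref{lemOrt1}, which the paper justifies only by symmetry. From \eqref{simtRel}, setting $Q_n=P_n/x$ gives $Q_n(-1-x)=(-1)^{n+1}Q_n(x)$. So $Q_{2j}$ is odd about $-1/2$ and $Q_{2k-1}$ is even. Consequently $Q_{2j}Q_{2k-1}$ is odd, and item 2 follows. Likewise $Q_{2j}^mQ_{2k}^n$ has parity $(-1)^{m+n}=-1$ when $m,n$ have different parity, so item 1 follows; the hypothesis $j\ne k$ is not even needed. Since we may assume the earlier lemma, I will simply cite it and note this parity check.
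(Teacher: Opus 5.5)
Your proposal is correct and takes the paper's route: the lemma is obtained by pushing Lemma \ref{lemOrt1} through \eqref{conecEulerianPolvsGeomPol} and the substitution \eqref{eq:xu}, which gives the transfer identity \eqref{eq:transfer} with $E=2+\sum_\ell\alpha_\ell(r_\ell-1)$ and the harmless sign $(-1)^E$ absorbed. Your explicit checks make precise what the paper leaves implicit: the exponents, the $|x|^{-2}$ decay at $-\infty$ (using $r_\ell\ge1$), and the parity $Q_n(-1-x)=(-1)^{n+1}Q_n(x)$, which also shows that $j\ne k$ is superfluous.
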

On the analogous to the ratio relation \eqref{descFracSimples} for Eulerian polynomial we write the zeros of $A_n$ as $\zeta_{1,n}<\cdots<\zeta_{n-1,n}<0$. It is known that they are all real, simple and negative, and that the zeros of $A_{n-1}$ interlace those of $A_n$ (see \cite{SavVis,YanZha}). By \eqref{conecEulerianPolvsGeomPol}, the relation \eqref{descFracSimples}
 is written as
\begin{lemma}\label{lem:PF}
There exist positive numbers $\alpha_{k,n}$, $k=1,\dots,n-1$, such that $\sum_{k=1}^{n-1}\alpha_{k,n}=1$ and
\begin{equation}\label{eq:A-PF}
 n\frac{A_{n-1}(x)}{A_n(x)} = \sum_{k=1}^{n-1}\frac{\alpha_{k,n}}{x-\zeta_{k,n}}, 
\end{equation}
 $\zeta_{k,n}$ are the simple real zeros of $A_n$, all lying in $(-\infty,0)$. Moreover the sequence of functions $\left(\frac{n\,A_{n-1}(z)}{A_n(z)}\right)_{n\in\mathbb{N}}$ is uniformly bounded in compact subsets of  $\mathbb{C}\setminus(-\infty,0]$; i.e. if $K\subset \mathbb{C}\setminus(-\infty,0]$ is a compact set, then there exists a constant $C=C(K)>0$ such that for all  $z\in K$ we have
\[
\left|\frac{n\, A_{n-1}(z)}{A_n(z)}\right|\le C.
\]
\end{lemma}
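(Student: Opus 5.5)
The plan is to transfer Lemma \ref{LemCocienteCons} through the M\"obius change of variables \eqref{eq:xu}, $u=1/(x-1)$, using the link \eqref{conecEulerianPolvsGeomPol}, namely $P_n(u)=A_n(x)/(x-1)^n$. This map sends $x\in(-\infty,0)$ bijectively onto $u\in(-1,0)$. It sends $x=0$ to $u=-1$ and $x=1$ to $u=\infty$. Consequently it maps $\C\setminus(-\infty,0]$ onto $\overline{\C}\setminus[-1,0]$, and in particular $x=1$ goes to $u=\infty$. Since $P_n(u)=u\,Q_n(u)$ and $A_n(x)=(x-1)^nP_n(u)$, the nonzero zeros $z^*_{k,n}$ of $P_n$ correspond exactly to the zeros of $A_n$ through $z^*_{k,n}=1/(\zeta_{k,n}-1)$. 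Hence the $\zeta_{k,n}$ are real, simple and lie in $(-\infty,0)$, with their order reversed. Interlacing transfers in the same way.

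\emph{Partial fractions.} Dividing the two instances of \eqref{conecEulerianPolvsGeomPol} gives
\[
n\frac{A_{n-1}(x)}{A_n(x)}=\frac{1}{x-1}\,n\frac{P_{n-1}(u)}{P_n(u)}=u\sum_{k=1}^{n-1}\frac{\lambda_{k,n}}{u-z^*_{k,n}}.
\]
A short computation gives
\[
\frac{1}{u-z^*_{k,n}}=\frac{(x-1)(\zeta_{k,n}-1)}{\zeta_{k,n}-x}.
\]
Combining this with $u(x-1)=1$, the representation becomes
\[
n\frac{A_{n-1}(x)}{A_n(x)}=\sum_{k=1}^{n-1}\frac{\lambda_{k,n}(1-\zeta_{k,n})}{x-\zeta_{k,n}},
\]
which is \eqref{eq:A-PF} with $\alpha_{k,n}=\lambda_{k,n}(1-\zeta_{k,n})$. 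These are positive because $\lambda_{k,n}>0$ and $\zeta_{k,n}<0$.

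\emph{Normalization.} This is the point I expect to need care. Multiplying by $x$ and letting $x\to\infty$ gives $\sum_k\alpha_{k,n}=n$, since $A_n$ and $A_{n-1}$ are monic of degrees $n-1$ and $n-2$. For instance, $n=2$ gives $2/(1+x)$, so $\alpha_{1,2}=2$. Thus the derivation produces the normalization $\sum_k\alpha_{k,n}=n$, equivalently $\sum_k\alpha_{k,n}/n=1$, rather than $1$. I would state the result with this normalization, since the uniform bound does not depend on it.

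\emph{Uniform bound.} A naive estimate from \eqref{eq:A-PF} only gives $n/d(K,(-\infty,0])$, which is not uniform in $n$. I would therefore argue with the $u$-form instead:
\[
\left|n\frac{A_{n-1}(x)}{A_n(x)}\right|\le\sum_k\lambda_{k,n}\frac{|u|}{|u-z^*_{k,n}|}\le \sup_{u\in K',\,t\in[-1,0]}\left|\frac{u}{u-t}\right|,
\]
where the last step uses $\sum_k\lambda_{k,n}=1$. Here $K'$ is the image of $K$, which is a compact subset of $\overline{\C}\setminus[-1,0]$. The function $(u,t)\mapsto u/(u-t)$ is continuous on $K'\times[-1,0]$, with value $1$ at $u=\infty$, so the supremum is a finite constant $C(K)$ independent of $n$.
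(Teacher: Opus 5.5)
Your proposal is correct and follows the paper's route. The paper likewise obtains \eqref{eq:A-PF} by transferring \eqref{descFracSimples} through \eqref{conecEulerianPolvsGeomPol}, with exactly your weights $\alpha_{k,n}=\lambda_{k,n}/(-z^*_{k,n})=\lambda_{k,n}(1-\zeta_{k,n})$.

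You are also right that the normalization $\sum_k\alpha_{k,n}=1$ in the statement is wrong: the weights sum to $n$, as your $n=2$ example shows. This is forced anyway, since each $1-\zeta_{k,n}>1$ while $\sum_k\lambda_{k,n}=1$. As a consequence, the uniform bound does not follow from the crude estimate $\sum_k\alpha_{k,n}/d(K,(-\infty,0])$, and your argument via $\sum_k\lambda_{k,n}=1$ and the continuity of $u/(u-t)$ on $K'\times[-1,0]$ supplies what the paper leaves implicit.
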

In the above lemma one has $\alpha_{k,n}:=\frac{\lambda_{k,n}}{-x_{k,n}} >0$.

\section{Proof of main results}\label{SectProofMain-Result}

In this section we prove Theorem \ref{teoAsympIn} and others about the  asymptotic behaviour of Geometric polynomials. Also we provide the corresponding results for Eulerian polynomials.

\begin{theo}\label{teoAsymp} We have
\begin{equation}
\label{convForm}
\lim_{n\to\infty}\frac{P_n(z)(\log\frac{1+z}{z})^{n+1}}{n!}=\frac{1}{1+z}
\end{equation}
uniformly on compact subsets of $\overline{\C}\setminus [-1,0].$

\end{theo}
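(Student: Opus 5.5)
We would prove \eqref{convForm} by a singularity (Darboux/residue) analysis of the generating function \eqref{mainEquation} in the variable $t$, with $z$ a parameter. For fixed $z\notin[-1,0]$, $t\mapsto f(z,t)=1/(1-z(e^t-1))$ is meromorphic, with poles where $e^t=(1+z)/z$, that is, at
\[
t_k(z)=\log\frac{1+z}{z}+2\pi i k,\qquad k\in\Z,
\]
using the principal logarithm. The key observation is that $w=(1+z)/z$ ranges over $\C\setminus(-\infty,0]$ exactly when $z\notin[-1,0]$: indeed $w\in(-\infty,0]$ iff $z\in[-1,0)$, and $z=\infty$ gives $w=1$. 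Hence $L(z):=\log\frac{1+z}{z}$ has $|\operatorname{Im}L|<\pi$, so $|L(z)|<|L(z)\pm 2\pi i k|$ for $k\ne0$. Also $L(z)\neq0$ for finite $z$, so $t_0=L(z)$ is the unique dominant pole. It is simple, with residue
\[
\operatorname{Res}_{t=t_0}f=\frac{-1}{z e^{t_0}}=-\frac{1}{1+z}.
\]

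Cauchy's formula then gives, on a circle of radius $R$ strictly between $|t_0|$ and the distance to the next pole,
\[
\frac{P_n(z)}{n!}=\frac{1}{2\pi i}\oint_{|t|=R}\frac{f(z,t)}{t^{n+1}}\,\dd t+\frac{1}{(1+z)t_0^{n+1}},
\]
where the second term comes from the residue of $-f/t^{n+1}$ at $t_0$. Multiplying by $t_0^{n+1}$ shows that $P_n(z)L(z)^{n+1}/n!-1/(1+z)$ is bounded by $\max_{|t|=R}|f|\,(|t_0|/R)^n$, which tends to $0$. This gives the pointwise limit.

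Uniformity on a compact $K\subset\overline{\C}\setminus[-1,0]$ is the main obstacle. On $K$, $L$ is continuous and $\operatorname{Im}L$ stays in some $[-\pi+\delta,\pi-\delta]$, so $|L|\le M$. We then take a $z$-dependent radius, say $R(z)=|L(z)|+\eta$ with $\eta>0$ small uniformly. Then $\min_{k\ne0}|t_k|-|t_0|\ge c(\delta)>0$ uniformly on $K$, since $|t_0\pm2\pi i|-|t_0|$ is bounded below on the compact set of values $L(K)$. The circle therefore stays a uniform distance from all poles, and $|f|$ is uniformly bounded on it by compactness and continuity in $(z,t)$. The ratio $|t_0|/R\le M/(M+\eta)<1$ is also uniform, but only if $|L|$ is bounded away from $0$, which fails near $z=\infty$ where $L\to0$.

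We handle $z=\infty$ separately, because the statement includes it in $\overline{\C}$. There $P_n(z)L^{n+1}\sim n!z^n\cdot z^{-(n+1)}\to 0$, matching $1/(1+z)\to0$. Near $\infty$ we write $s=1/z$, so $f=s/(s+1-e^t)$, whose pole is at $t_0=\log(1+s)\approx s$. We use the contour $|t|=\rho$ with $\rho$ fixed, for example $\rho=1$ for $|s|$ small. This circle avoids poles uniformly, and $f$ there is $O(|s|)$. The error term, multiplied by $|t_0|^{n+1}$, is then $O(|s|(|t_0|/\rho)^{n+1})$, which is small uniformly. Alternatively, one can cover $K$ by finitely many neighbourhoods and apply the argument of the previous paragraph on each piece. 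Combining the two regimes, for example via $|L|\ge\epsilon$ versus $|L|<\epsilon$, yields uniform convergence on $K$.

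As a remark, Lemma \ref{LemCocienteCons}, together with Vitali's theorem, could upgrade the convergence from a dense set. The direct contour argument above already yields uniformity, however.
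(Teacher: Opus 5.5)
Your proof is correct and is essentially the paper's. Both arguments isolate the simple dominant pole $t_0=\log\frac{1+z}{z}$ (residue $-1/(1+z)$) and bound the remaining Cauchy integral on a circle of radius strictly between $|t_0|$ and $|t_0\pm 2\pi i|$. The paper subtracts the principal part to get an analytic $g$, which is equivalent to your contour shift, and gets uniformity from a finite cover rather than your $z$-dependent radius.

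The only real difference is at $z=\infty$. The paper uses the maximum principle on the region bounded by a level curve of $|\log\frac{1+z}{z}|$, while you estimate directly in $s=1/z$. That separate step is not actually needed: $|L|/(|L|+\eta)$ only decreases as $L\to 0$, and $f$ stays bounded (indeed $O(|s|)$) on $|t|=|L|+\eta$, so your first argument already covers $\infty$. Also, your error bound is missing a harmless factor $|t_0|$.
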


Hereafter $\log(\cdot)$ is the principal branch of the logarithmic. For $z>0$ Theorem \ref{teoAsymp} is proved  in \cite{Bel}.

\begin{proof}
The function $f(z,t)$ has a simple pole at each point $t=t_k=\log\frac{1+z}{z}+2\pi k i,$ $k\in\Z$. These poles are simple and
\[
\lim_{t\to \log\frac{1+z}{z}}(t-\log\frac{1+z}{z})f(z,t)=\frac{-1}{1+z}.
\]
Let $L(z)=\min\{|\log\frac{1+z}{z}+2\pi i|, |\log\frac{1+z}{z}-2\pi i|\}$. The principal branch of the logarithmic function $\log\frac{1+z}{z}$ has imaginary part in $(-\pi,\pi)$,
\[
\log\frac{1+z}{z}=\log\left|\frac{1+z}{z}\right|+i \arg\left(\frac{1+z}{z}\right),\quad  
\arg\left(\frac{1+z}{z}\right)\in (-\pi,\pi)
\]
for $z\in \C\setminus [-1,0]$. Observe that $\log\frac{1+z}{z}+2\pi i$ and $\log\frac{1+z}{z}-2\pi i$ are the  singularities  of $f(z,t)$  different from $\log\frac{1+z}{z}$ with smaller modulus. Then
\[
\left|\log\frac{1+z}{z}\right|<L(z)
\]
and we can take $R=R(z)$ such that $|\log\frac{1+z}{z}|<R<L(z)$. 
Thus, 
\[
g(z,t)=f(z,t)+\frac{1}{(z+1)(t-\log\frac{1+z}{z})}
\]
is an analytic function in the disk
\[
\{t\in\C:|t|<L(z)\},
\]
which contains to $t=\log\frac{1+z}{z}$. Then
\begin{align}
\label{eq2}
\frac{P_n(z)}{n!}&=\frac{1}{2\pi i}\oint_{|t|=R}\frac{g(z,t)}{t^{n+1}}\, \dd t-\frac{1}{2\pi i}\oint_{|t|=R}\frac{\dd t}{(1+z)(t-\log\frac{1+z}{z})t^{n+1}} \nonumber\\&=\frac{1}{2\pi i}\oint_{|t|=R}\frac{g(z,t)}{t^{n+1}}\, \dd t+\frac{1}{(1+z)(\log\frac{1+z}{z})^{n+1}}.
\end{align}
Since $g(z,t)$ is an analytic function in $\{t\in\C:|t|<L(z)\}$ we have
\begin{equation}
\label{oPeq}
\frac{1}{2\pi i}\oint_{|t|=R}\frac{g(z,t)\dd t}{t^{n+1}}=O\left(\frac{1}{R^n}\right)=o\left(\frac{1}{|\log\frac{1+z}{z}|^n}\right),\, \text{as }n\to\infty,
\end{equation}
and then from \eqref{eq2} follows \eqref{convForm}.

Now let $K$ be a compact set such that $K\subset \C\setminus [-1,0]$. Since the translation and the principal branch of logarithm are continuous functions, for each point $z_0\in K$ we can find an open neighborhood $V(z_0)$ of $z_0$ such that for all $z\in \overline{V(z_0)}$ we have
\[
\left|\log\frac{1+z}{z}\right|<R(z_0)<L(z).
\]
Thus \eqref{oPeq} holds uniformly for $z\in V(z_0)$. As $K$ is a compact subset we can find a finite number of sets $\left(V(z_j)\right)_{j=1}^{N}$ which cover $K$ such that  
\begin{equation}
\label{eqConvUnif}
\lim_{n\to\infty}(R(z_j))^n\left(\frac{P_n(z)}{n!}-\frac{1}{(1+z)(\log\frac{1+z}{z})^{n+1}}\right)=0,
\end{equation}
$z\in V(z_j)$ and $j=1,\ldots,N$. Therefore \eqref{convForm} holds uniformly on compact subsets of $\C\setminus [-1,0]$. 

Finally we show that \eqref{convForm} is also true on compact subset of $\overline{\C}\setminus [-1,0]$. For each compact set $K\subset \overline{\C}\setminus [-1,0]$, there exists $R>0$ such that    $K\subset \Delta_R=\{z\in\overline{\C}\setminus[-1,0]:\frac{1}{|\log\frac{1+z}{z}|}\ge R\}$. Since  $\log\frac{1+z}{z}\sim \frac1z$ as $z\to\infty$, the function $\frac{P_n(z)(\log\frac{1+z}{z})^{n+1}}{n!}-\frac{1}{z+1}$ admit an analytic extension to $\overline{\C}\setminus [-1,0]$. In addition, we have proved that \eqref{convForm} holds uniformly on the level curve $\{z\in\C\setminus[-1,0]:\frac{1}{|\log\frac{1+z}{z}|}=R\}$. Hence, by the maximum principle
\[
\lim_{n\to\infty}\max_{z\in \Delta_R}\left|\frac{P_n(z)(\log\frac{1+z}{z})^{n+1}}{n!}-\frac{1}{z+1}\right|=0.\qedhere
\]
\end{proof}

\begin{figure}
\centering
\begin{subfigure}[b]{0.45\textwidth}
\includegraphics[width=\textwidth]{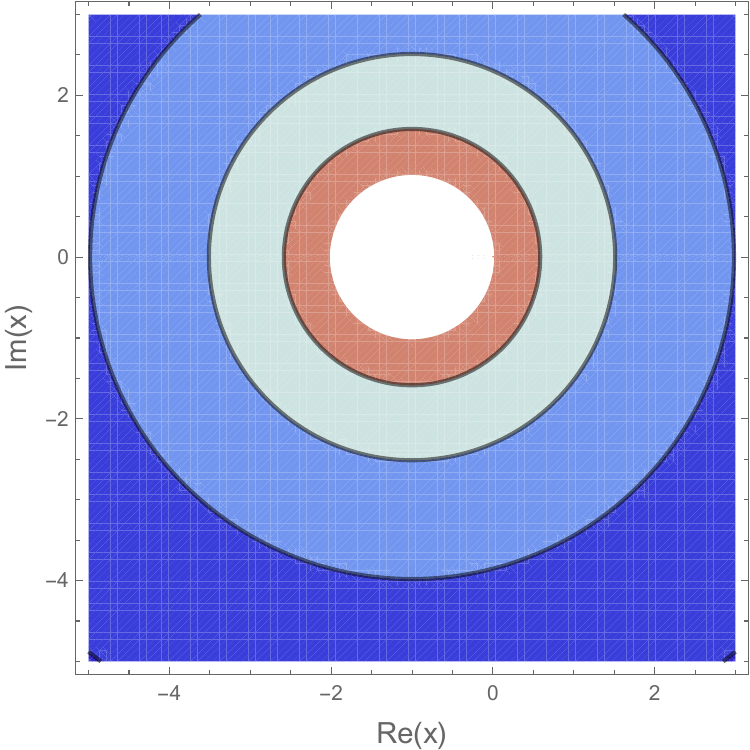}
\end{subfigure}
\begin{subfigure}[b]{0.45\textwidth}
\includegraphics[width=\textwidth]{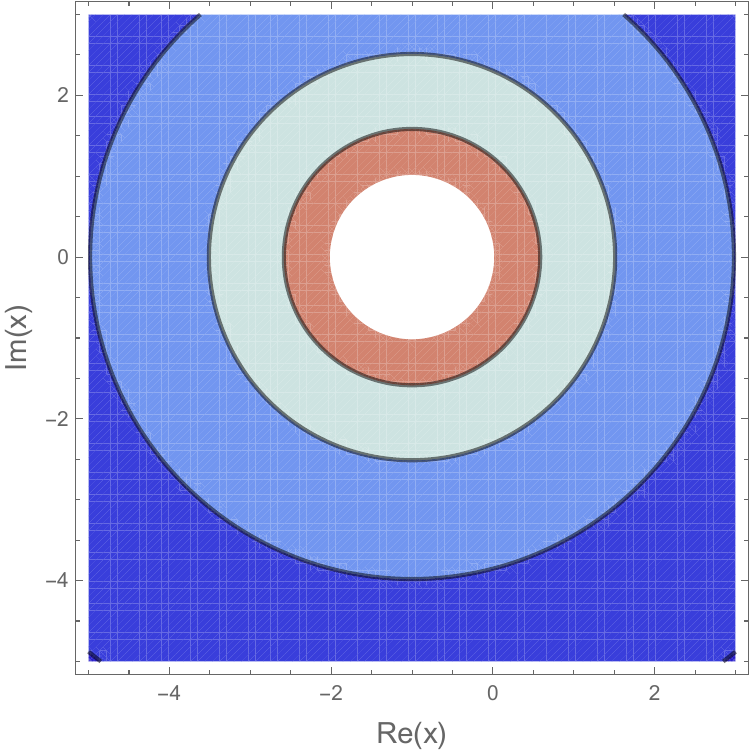}
\end{subfigure}
\caption{Level curve  of the modulus of $\frac{P_n(z)(\log\frac{1+z}{z})^{n+1}}{n!}$ (left) and its approximation, $\frac{1}{1+z}$ (right).}\label{fig:ModPolApproxOut}
\end{figure}

\begin{figure}
\centering
\begin{subfigure}[b]{0.45\textwidth}
\includegraphics[width=\textwidth]{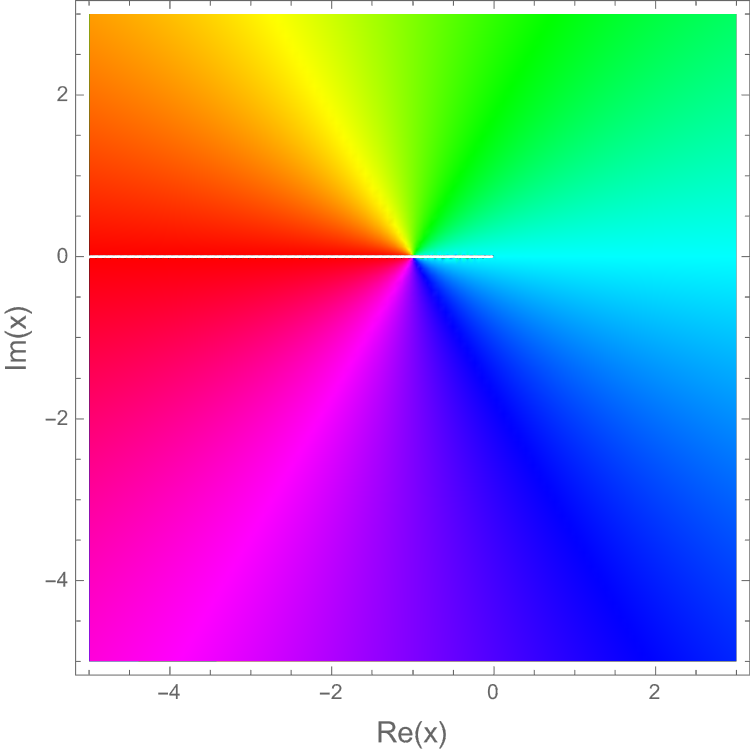}
\end{subfigure}
\begin{subfigure}[b]{0.45\textwidth}
\includegraphics[width=\textwidth]{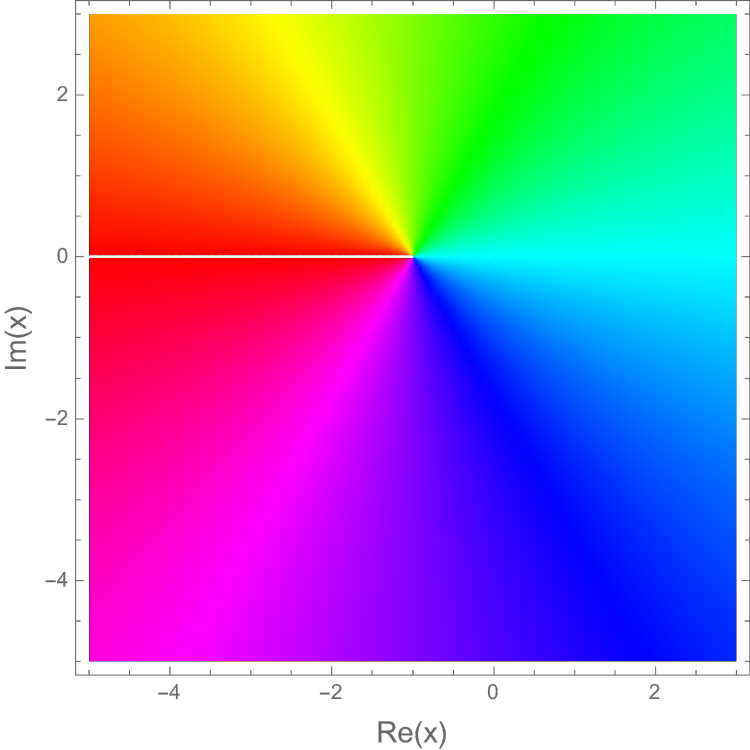}
\end{subfigure}
\caption{Hue of the arguments for $\frac{P_n(z)(\log\frac{1+z}{z})^{n+1}}{n!}$ (left) and its approximation, $\frac{1}{1+z}$ (right).}\label{fig:ArgPolApproxOut}
\end{figure}

\begin{rem}\label{obsGemConv} According to \eqref{eqConvUnif} the rate of converge in Theorem  \ref{teoAsymp} is geometric on each compact subset of $\overline{\C}\setminus[-1,0]$; namely, for each compact subset $K\subset \overline{\C}\setminus[-1,0]$ there exist positive constants $q,C$ with  $q<1$ such that
\[
\max_{z\in K}\left|\frac{P_n(z)\left(\log\frac{1+z}{z}\right)^{n+1}}{n!}-\frac{1}{1+z}\right|\le Cq^n.
\]
To give an idea of the proximity of the sequence and its limit, Figures \ref{fig:ModPolApproxOut} and \ref{fig:ArgPolApproxOut} show modulus and hue of the arguments for $\frac{P_n(z)(\log\frac{1+z}{z})^{n+1}}{n!}$ and its approximation, $\frac{1}{1+z}$, respectivaly.

\end{rem}

Let $\mu_{P_n}$ be the normalized counting measure on the zeros of $P_n$, i.e. $\mu_{P_n}=\frac1n\sum_{j}\delta_{z_{j,n}}$, where $\delta_{z_{j,n}}$ is the Dirac measure at $z_{j,n}$. Because of
\[
\frac{nP_{n-1}(z)}{P_n(z)}=\frac{\frac{P_{n-1}(z)}{(n-1)!}(\log\frac{1+z}{z})^n}{\frac{P_{n}(z)}{n!}(\log\frac{1+z}{z})^{n+1}}\log\frac{1+z}{z},
\]
from Theorem \ref{teoAsymp} follows the next result.

\begin{cor}\label{corLimRaizNesima} We have
\begin{equation*}
\label{limCociente}
\lim_{n\to\infty}\frac{nP_{n-1}(z)}{P_n(z)}={\log\frac{1+z}{z}},
\end{equation*}
uniformly on compact subsets of $\overline{\C}\setminus [-1,0]$. Moreover, 
\begin{equation*}
\label{corRaizLim}
\lim_{n\to\infty}\left|\frac{P_n(z)}{n!}\right|^{1/n}=\frac{1}{|\log\frac{1+z}{z}|}
\end{equation*}
and 
\begin{equation*}
\label{LimPotenciales}
\lim_{n\to\infty}\int\log|z-t|\, d\mu_{P_n}(t)=-\log \left|\log\frac{1+z}{z}\right|
\end{equation*}
uniformly on compact subsets of $\C\setminus [-1,0]$.
\end{cor}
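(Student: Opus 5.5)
The plan is to derive all three limits from Theorem \ref{teoAsymp} together with Remark \ref{obsGemConv}. Throughout, $K$ is a compact subset of $\overline{\C}\setminus[-1,0]$ (for the first limit) or of $\C\setminus[-1,0]$ (for the other two). Write
\[
F_n(z)=\frac{P_n(z)}{n!}\Big(\log\frac{1+z}{z}\Big)^{n+1}.
\]
Theorem \ref{teoAsymp} gives $F_n\to \frac{1}{1+z}$ uniformly on $K$.

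\textbf{First limit.} On a compact subset of $\C\setminus[-1,0]$, $\frac{1}{1+z}$ is bounded away from zero, so for large $n$ the functions $F_n$ are bounded away from zero uniformly on $K$. The displayed identity preceding the corollary reads
\[
\frac{nP_{n-1}(z)}{P_n(z)}=\frac{F_{n-1}(z)}{F_n(z)}\log\frac{1+z}{z},
\]
and $F_{n-1}/F_n\to 1$ uniformly. The factor $\log\frac{1+z}{z}$ is continuous, hence bounded, on $K$, so the limit follows.

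The point $z=\infty$ needs separate care, because there both $\frac{1}{1+z}$ and $\log\frac{1+z}{z}$ vanish. I would handle it by writing $F_n(z)=\frac{1}{1+z}\,(1+\epsilon_n(z))$. The geometric rate in Remark \ref{obsGemConv} and the maximum-principle argument at the end of the proof of Theorem \ref{teoAsymp} both apply to the function $(1+z)F_n(z)$, which is analytic at $\infty$ with value $1$ there. So $\epsilon_n\to0$ uniformly near $\infty$, and the ratio argument goes through. Alternatively, one can use the partial-fraction bound of Lemma \ref{LemCocienteCons} near $\infty$: both sides of the first limit tend to $0$ as $z\to\infty$, uniformly in $n$, since $|nP_{n-1}/P_n|\le 1/d(z,[-1,0])$.

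\textbf{Second limit.} From $\frac{P_n(z)}{n!}=F_n(z)\big(\log\frac{1+z}{z}\big)^{-(n+1)}$ we get
\[
\Big|\frac{P_n(z)}{n!}\Big|^{1/n}=|F_n(z)|^{1/n}\,\Big|\log\frac{1+z}{z}\Big|^{-(n+1)/n}.
\]
On $K\subset\C\setminus[-1,0]$, $|F_n|$ lies between two positive constants for large $n$, so $|F_n|^{1/n}\to1$ uniformly. Also $|\log\frac{1+z}{z}|$ is bounded above and below on $K$ (it vanishes only at $\infty$), so $|\log\frac{1+z}{z}|^{-1/n}\to1$ uniformly. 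This gives the second limit.

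\textbf{Third limit.} Take logarithms of the second limit:
\[
\frac1n\log|P_n(z)|-\frac1n\log n!\to-\log\Big|\log\frac{1+z}{z}\Big|
\]
uniformly on $K$. Since the leading coefficient of $P_n$ is $n!$ and $P_n$ has exactly $n$ simple zeros $z_{j,n}$ (Lemma \ref{lemZer}), we have $P_n(z)=n!\prod_j(z-z_{j,n})$. Therefore
\[
\frac1n\log\Big|\frac{P_n(z)}{n!}\Big|=\int\log|z-t|\,d\mu_{P_n}(t),
\]
and the third limit is exactly the logarithm of the second.

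The only delicate step is the uniformity near $z=\infty$ in the first limit. Everything else is a direct consequence of uniform convergence to a nonvanishing limit on compacta of $\C\setminus[-1,0]$.
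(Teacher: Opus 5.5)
Your proposal is correct and follows the paper's route. The paper derives all three limits from Theorem \ref{teoAsymp}: the ratio limit comes from the displayed identity $\frac{nP_{n-1}}{P_n}=\frac{F_{n-1}}{F_n}\log\frac{1+z}{z}$, and the $n$-th root and potential limits use the factorization $P_n(z)=n!\prod_j(z-z_{j,n})$. You also treat $z=\infty$ separately, where both your $F_n$ and $\frac{1}{1+z}$ vanish; that detail is left implicit in the paper, and either of your fixes works (renormalizing by $(1+z)$ before applying the maximum principle, or using the bound $|nP_{n-1}/P_n|\le 1/d(z,[-1,0])$ from Lemma \ref{LemCocienteCons}).
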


\begin{rem} According to \eqref{FubNum} and Theorem \ref{teoAsymp}, we have the following relation which can be also found in \cite{Gro},
\[
\lim_{n\to\infty}\left(\frac{(\log 2)^{n+1}}{n!}\sum_{k=0}^\infty\frac{k^n}{2^k}\right)=1.
\]
\end{rem}

The asymptotics of $(P_n(z))_{n\ge 0}$ in the interval $(-1,0)$ given in Theorem \ref{teoAsympIn} requires to observe that the function $f(z,t)$ has a simple poles at $\ell(z)\pm i(2k-1)\pi$, $k\in\Z$, with residue at these points $-1/(1+z)$ for each $z\in (-1,0)$. Remember that $\ell(z)=\log|\frac{1+z}{z}|$. Before giving the proof of Theorem \ref{teoAsympIn} we  state a technical results. Define the function
\[
h(z,t)=(1+z)\left(f(z,t)+\frac{1}{1+z}\left(\frac{1}{t-(\ell(z)+\pi i)}+\frac{1}{t-(\ell(z)-\pi i)}\right)\right).
\]
This function is analytic in $\{t\in\C:|t|<|\ell(z)+3\pi i|\}$. Doing the change of variable $u=u(t,z)=t-\ell(z)$ and by the partial fraction decomposition of hyperbolic tangent, we have
\begin{equation}
\label{HiperbFractionDecomposition}
h(z,t)=G(u)=\frac{1}{e^u+1}+\frac{1}{u-\pi i}+\frac{1}{u+\pi i}=\frac12-2u\sum_{k=2}^\infty\frac{1}{u^2+(2m-1)^2\pi^2}.
\end{equation}
\begin{lemma} \label{lemAcotaFuncSinSingularidad}There exist positive constants\footnote{As usual, the same constants in different expressions may not represent the same values. } $C_1,C_2$ such that for all $z\in(-1,0)$ 
\[
\sup\{|G(u)|:|u+\ell(z)|=\sqrt{(\ell(z))^2+2\pi^2}\}\le C_1+C_2|\ell(z)|.
\] 
\end{lemma}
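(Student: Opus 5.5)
Set $\ell=\ell(z)$, so the circle in question is $|t|=R_z:=\sqrt{\ell^2+2\pi^2}$ in the variable $t=u+\ell$, and recall
\[
G(u)=\frac{1}{e^u+1}+\frac{1}{u-\pi i}+\frac{1}{u+\pi i}.
\]
The plan is to bound the three terms of $G$ separately on this circle.

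\textbf{Placement of the circle.} The poles of $f(z,\cdot)$ are at $t=\ell\pm(2k-1)\pi i$. The two nearest, $t=\ell\pm\pi i$, have modulus $\sqrt{\ell^2+\pi^2}<R_z$, so they lie strictly inside the circle. The next ones, $t=\ell\pm 3\pi i$, have modulus $\sqrt{\ell^2+9\pi^2}>R_z$, so they lie strictly outside. The circle therefore avoids all poles, and $G$ is in fact analytic there since the first two poles are removed.

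\textbf{The two rational terms.} We need a lower bound for $|u\mp\pi i|=|t-(\ell\pm\pi i)|$ when $|t|=R_z$. The distance from the circle to an interior point $p$ is $R_z-|p|$. Hence
\[
|u\mp\pi i|\ \ge\ \sqrt{\ell^2+2\pi^2}-\sqrt{\ell^2+\pi^2}=\frac{\pi^2}{\sqrt{\ell^2+2\pi^2}+\sqrt{\ell^2+\pi^2}}\ \ge\ \frac{\pi^2}{2\sqrt{\ell^2+2\pi^2}}.
\]
Consequently each of these terms is at most $2\sqrt{\ell^2+2\pi^2}/\pi^2\le C(1+|\ell|)$. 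This is exactly where the linear growth in $|\ell|$ comes from.

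\textbf{The term $1/(e^u+1)$.} Its poles are $u=(2k+1)\pi i$, that is, $t=\ell+(2k+1)\pi i$, which are exactly the poles of $f$. The nearest poles to the circle are again $t=\ell\pm\pi i$ at distance $\gtrsim 1/(1+|\ell|)$, and $t=\ell\pm3\pi i$ at distance $\sqrt{\ell^2+9\pi^2}-\sqrt{\ell^2+2\pi^2}\gtrsim 1/(1+|\ell|)$. We will use two facts:
\begin{itemize}
\item $\bigl|1/(e^u+1)\bigr|\le C/\operatorname{dist}(u,(2\Z+1)\pi i)$ whenever that distance is at most $1$, by periodicity and the simple-pole structure;
\item $\bigl|1/(e^u+1)\bigr|$ is bounded by an absolute constant when the distance is at least $1$, since it tends to $0$ or $1$ as $\operatorname{Re}u\to\pm\infty$ and is periodic in $\operatorname{Im} u$.
\end{itemize}
Combining these gives $|1/(e^u+1)|\le C(1+|\ell|)$ on the circle.

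A cleaner alternative is to use the representation $G(u)=\tfrac12-2u\sum_{k\ge2}(u^2+(2k-1)^2\pi^2)^{-1}$ valid there. However, $|u|$ can be as large as about $2|\ell|$ and $u^2$ may approach $-9\pi^2$, so the direct term-by-term bound above seems simpler.

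\textbf{Main obstacle.} The hard step is the uniformity near the removable points together with the poles at $\pm3\pi i$ in the $1/(e^u+1)$ term when $|\ell|$ is large. Near $t=\ell\pm\pi i$, both $1/(e^u+1)$ and $1/(u\mp\pi i)$ are individually large but each is only $O(1+|\ell|)$ by the distance estimate, so no cancellation is needed. I would still check carefully that the local constant in $|1/(e^u+1)|\le C/\operatorname{dist}$ is absolute. It is, because $e^u+1$ has derivative of modulus $1$ at each zero and is $2\pi i$-periodic.

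Summing the three bounds gives constants $C_1,C_2$ independent of $z\in(-1,0)$.
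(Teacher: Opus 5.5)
Your proof is correct, and it follows a genuinely different route from the paper.

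The paper works with the series form $G(u)=\frac12-2u\sum_{m\ge2}\bigl(u^2+(2m-1)^2\pi^2\bigr)^{-1}$. It bounds $|u|\le C(1+|\ell(z)|)$ on the circle and asserts $|u^2+(2m-1)^2\pi^2|\ge(2m-1)^2\pi^2$ there, so it attributes the linear growth to the prefactor $u$. You instead bound the three terms of the closed form separately. You use only two facts: the circle $|t|=\sqrt{\ell^2+2\pi^2}$ stays at distance $\gtrsim 1/(1+|\ell|)$ from every point $\ell+(2k+1)\pi i$ (inside for $k=0,-1$, outside otherwise), and the uniform estimate $|1/(e^u+1)|\le C\max\{1,\operatorname{dist}(u,(2\Z+1)\pi i)^{-1}\}$.

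Your route is the more reliable one, because the lower bound invoked in the paper does not hold on this circle. The circle always passes through $u=\pm\sqrt{2}\,\pi i$, where $|u^2+9\pi^2|=7\pi^2<9\pi^2$. For large $|\ell|$ it comes within $\sqrt{\ell^2+9\pi^2}-\sqrt{\ell^2+2\pi^2}\sim 7\pi^2/(2|\ell|)$ of $u=\pm 3\pi i$, so $|u^2+9\pi^2|$ is only of order $1/|\ell|$ there. This is exactly the concern you raised about $u^2$ approaching $-9\pi^2$.

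Your distance argument identifies the true source of the linear growth. Near $u=\pm3\pi i$ one has $|G(u)|\approx 2|\ell|/(7\pi^2)$, so $C_1+C_2|\ell|$ is sharp in order. It also needs no cancellation at the removed poles $\pm\pi i$.

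The series route would be a one-line estimate only if a valid uniform lower bound were available. If you bound it termwise with the correct distances to $\pm(2m-1)\pi i$ and the crude $|u|\le C(1+|\ell|)$, you get, without further care, only $O\bigl((1+|\ell|)^2\bigr)$. That is weaker than the lemma and would shrink the admissible range of $c$ in Theorem~\ref{teoAsympIn}.
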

\begin{proof} By the triangular inequality, if $|u+\ell(z)|=\sqrt{(\ell(z))^2+2\pi^2}$, we have
\[
|u|\le |\ell(z)|+\sqrt{(\ell(z))^2+2\pi^2}\le C_1|\ell(z)|+C_2,
\]
for some positive constants $C_1,C_2$. Since $\ell(z)-i(2m-1)\pi$ and $\ell(z)+i(2m-1)\pi$ are in the same line of $\ell(z)$, the center of the circle $|u+\ell(z)|=\sqrt{(\ell(z))^2+2\pi^2}$, we have
\[
|u^2+(2m-1)^2\pi^2|=|u-i(2m-1)\pi||u+i(2m-1)\pi|\ge(2m-1)^2\pi^2
\]
for  all such $u$ and the statement of the lemma follows immediately from \eqref{HiperbFractionDecomposition} and the above two inequalities.
\end{proof}

\begin{proof}[Proof of Theorem \ref{teoAsympIn}] Let
\[
H_n(z)=\frac{(1+z)P_n(z)((\ell(z))^2+\pi^2)^{(n+1)/2}}{n!}.
\]
Observe that
\begin{equation}
\label{logSeq}
\ell(z_n)=-L_n\quad \text{and}\quad |\ell(z)|\le |\ell(z_n)|,\quad z\in  [z_n,-1/2].
\end{equation}
 We have
\begin{equation}\label{FormInt}
H_n(z)-J_n(z)=\frac{((\ell(z))^2+\pi^2)^{(n+1)/2}}{2\pi i}\oint_{|t|=\sqrt{(\ell(z))^2+2\pi^2}}\frac{h(z,t)}{t^{n+1}}\, dt.
\end{equation}
The left hand side of the above equation is symmetric with respect to $z=-1/2$, so its enough to check the uniform convergence for $z\in [z_n,-1/2]$.
From Lemma \ref{lemAcotaFuncSinSingularidad}, \eqref{HiperbFractionDecomposition}, \eqref{logSeq}, and \eqref{FormInt} we get that there exists positive constant $C_1,C_2$ such that
\begin{align}\notag
\left|H_n(z)-J_n(z)\right| &\le (C_1|\ell(z)|^2+C_2)\left(\frac{(\ell(z))^2+\pi^2}{(\ell(z))^2+2\pi^2}\right)^{n/2}\\ &\le (C_1|\ell(z_n)|^2+C_2)\left(\frac{(\ell(z_n))^2+\pi^2}{(\ell(z_n))^2+2\pi^2}\right)^{n/2}.\label{AcotUnif}
\end{align}
Since $\log(1-x)<-x$ for $x\in (0,1)$, we obtain
\begin{align}
\left(\frac{(\ell(z_n))^2+\pi^2}{(\ell(z_n))^2+2\pi^2}\right)^{n/2}&=e^{\frac{n}{2}\notag\log\left(1-\frac{\pi^2}{(\ell(z_n))^2+2\pi^2}\right)}\\ &<e^{\frac{-\pi^2 n}{2(\ell(z_n))^2+4\pi^2}}=n^{-\frac{\pi^2}{2c^2}+o(1)},\quad \text{as }n\to\infty.\label{AcotExp}
\end{align}
Therefore, the theorem follows from \eqref{DefSeq}, \eqref{logSeq}, \eqref{AcotUnif}, and \eqref{AcotExp}.
\end{proof}

Since $c$ can be any number in $(0,\frac{\pi}{\sqrt{2}})$ the inequalities \eqref{AcotUnif} and \eqref{AcotExp} let us sharp the statement of Theorem \ref{teoAsympIn}.

\begin{theo}  \label{teoAsympIn2}  Given any $\delta>0$ there exists a  sequence $(z_n)_{n\in\N}\subset (-1,0)$, $\lim_{n\to\infty}z_n=-1$ such that
\[
\frac{(1+z)P_n(z)((\ell(z))^2+\pi^2)^{(n+1)/2}}{n!}-J_n(z)=o(\frac1{n^{\delta}})
\]
uniformly for $z\in[z_n,-1-z_n]$ as $n\to\infty$.
\end{theo}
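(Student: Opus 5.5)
We reuse the estimates \eqref{AcotUnif} and \eqref{AcotExp} from the proof of Theorem \ref{teoAsympIn}, but choose the endpoint sequence differently, so that the polynomial factor $n^{-\pi^2/(2c^2)}$ beats $n^{-\delta}$ together with the logarithmic loss coming from $C_1|\ell(z_n)|^2$.

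Fix $\delta>0$. We take $L_n=c\sqrt{n/\log n}$ with a constant $c$ no longer restricted to $(0,\pi/\sqrt2)$, but chosen small: $c>0$ such that $\frac{\pi^2}{2c^2}>\delta+1$. We then define $z_n=-e^{L_n}/(1+e^{L_n})$ as in \eqref{DefSeq}. Since $L_n\to\infty$, we get $z_n\to-1$. Also $\ell(z_n)=-L_n$ and, by \eqref{logSeq}, $|\ell(z)|\le L_n$ on $[z_n,-1/2]$. By the symmetry of $H_n-J_n$ about $-1/2$, it suffices to consider $z\in[z_n,-1/2]$.

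On this interval, \eqref{FormInt} and Lemma \ref{lemAcotaFuncSinSingularidad} give \eqref{AcotUnif}. Here we use that $s\mapsto (C_1s^2+C_2)\bigl(\frac{s^2+\pi^2}{s^2+2\pi^2}\bigr)^{n/2}$ is increasing in $s\ge0$, which is why the bound reduces to its value at $s=L_n$. Then \eqref{AcotExp} yields
\[
|H_n(z)-J_n(z)|\le (C_1L_n^2+C_2)\exp\Bigl(-\frac{\pi^2 n}{2L_n^2+4\pi^2}\Bigr).
\]
Now $L_n^2=c^2n/\log n$, so the exponent equals $-\frac{\pi^2}{2c^2}\log n\,(1+o(1))$. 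More precisely, $\frac{\pi^2 n}{2L_n^2+4\pi^2}=\frac{\pi^2}{2c^2}\log n-O\bigl(\frac{(\log n)^2}{n}\bigr)$, so the correction tends to $0$. The prefactor is $O(n/\log n)$. Altogether the bound is $O\bigl(n^{1-\pi^2/(2c^2)}/\log n\bigr)$, which is $o(n^{-\delta})$ by the choice of $c$, uniformly in $z$.

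The only point requiring care is the bookkeeping in \eqref{AcotUnif}. The constants $C_1,C_2$ must be independent of $z$ and $n$, which Lemma \ref{lemAcotaFuncSinSingularidad} provides. We also need the quadratic prefactor $C_1|\ell|^2+C_2$, which combines the bound on $G$ with the $t$-circle length and the factor $\sqrt{\ell^2+\pi^2}/\sqrt{\ell^2+2\pi^2}$. Its growth like $n/\log n$ costs one power of $n$, and this is absorbed by taking $c$ small enough that $\pi^2/(2c^2)>\delta+1$. Nothing else changes from the proof of Theorem \ref{teoAsympIn}.
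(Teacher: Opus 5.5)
Your proposal is correct and follows the paper's own argument. The paper deduces this theorem directly from \eqref{AcotUnif} and \eqref{AcotExp} by noting that $c$ may be taken as small as desired, which is exactly your choice $\pi^2/(2c^2)>1+\delta$ giving the bound $O\bigl(n^{1-\pi^2/(2c^2)}/\log n\bigr)=o(n^{-\delta})$. Such a $c$ automatically lies in $(0,\pi/\sqrt{2})$. Your remark that $s\mapsto (C_1s^2+C_2)\bigl(\frac{s^2+\pi^2}{s^2+2\pi^2}\bigr)^{n/2}$ is increasing also makes explicit the second inequality in \eqref{AcotUnif}, which the paper leaves implicit.
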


Taking into account Theorem \ref{teoAsympIn}  and $|J_n(z)|\le 2$ with $z\in(-1,0)$, for each $\epsilon>0$ there exists $N_0\in\N$ such that
\[
|P_n(z)|\le (2+\epsilon)\frac{n!}{(1+z)((\ell(z))^2+\pi^2)^{(n+1)/2}},\quad \text{for all }n\ge N_0,\quad z\in[z_n,-1-z_n].
\]
\begin{cor}Let $(z_n)_{n\in\N}$ be the sequence of Theorem \ref{teoAsympIn}. Then there exists a constant $C>0$ such that for all $n\in\N$ it holds
\[
|P_n(z)|\le C\frac{n!}{(1+z)((\ell(z))^2+\pi^2)^{(n+1)/2}}, \quad z\in [z_n,-1-z_n].
\]
\end{cor}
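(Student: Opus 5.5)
The corollary follows from Theorem \ref{teoAsympIn} together with the trivial bound $|J_n(z)|\le 2$; what needs care is making the constant uniform over all $n$, not just for large $n$. Write
\[
H_n(z)=\frac{(1+z)P_n(z)((\ell(z))^2+\pi^2)^{(n+1)/2}}{n!}.
\]
Since $|\ell(z)\pm\pi i|=((\ell(z))^2+\pi^2)^{1/2}$, each of the two terms in the numerator of $J_n(z)$ has modulus equal to the denominator, so $|J_n(z)|\le 2$ for every $z\in(-1,0)$. Fix $\epsilon=1$ in Theorem \ref{teoAsympIn}. Then there is $N_0$ such that $|H_n(z)-J_n(z)|\le 1$ for all $n\ge N_0$ and $z\in[z_n,-1-z_n]$. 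This gives $|H_n(z)|\le 3$ in that range, which is exactly the displayed inequality with $C=3$ once we divide by $(1+z)((\ell(z))^2+\pi^2)^{(n+1)/2}/n!>0$.

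For the finitely many indices $3\le n<N_0$ (and any initial indices, where $z_n$ may be defined by the same formula or arbitrarily in $(-1,-1/2]$), I argue that $H_n$ is bounded on $[z_n,-1-z_n]$.

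1. The interval $[z_n,-1-z_n]$ is a compact subset of $(-1,0)$, because $z_n=-e^{L_n}/(1+e^{L_n})\in(-1,-1/2)$.
2. On that interval, $\ell$ is continuous, $P_n$ is a polynomial, and $1+z$ is bounded.
3. Hence $H_n$ is continuous there and bounded by some $M_n$.

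Setting $C=\max\{3,M_1,\dots,M_{N_0-1}\}$ gives the bound for all $n$. Alternatively, one can get a bound uniform in $n$ directly from \eqref{AcotUnif}: for every $n$ and $z\in[z_n,-1/2]$ it gives $|H_n-J_n|\le (C_1L_n^2+C_2)\,n^{-\pi^2/(2c^2)+o(1)}$, and the right-hand side is a bounded sequence because $\pi^2/(2c^2)>1$ while $L_n^2=c^2 n/\log n$. The bound then transfers to $[-1/2,-1-z_n]$ by the symmetry of $H_n-J_n$ about $-1/2$ noted in the proof of Theorem \ref{teoAsympIn}.

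The only real obstacle is this passage from the eventual bound to a bound for all $n$; the finite-$n$ compactness argument handles it. I would also note that writing $(1+z)$ rather than $|1+z|$ in the denominator is harmless, since $1+z>0$ on $(-1,0)$.
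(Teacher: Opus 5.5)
Your proof is correct and follows the paper's route. The paper obtains $|P_n(z)|\le(2+\epsilon)\,n!/\big((1+z)((\ell(z))^2+\pi^2)^{(n+1)/2}\big)$ for $n\ge N_0$ from Theorem \ref{teoAsympIn} and $|J_n|\le 2$, and leaves the finitely many indices $n<N_0$ implicit. Your continuity argument on the compact intervals $[z_n,-1-z_n]\subset(-1,0)$, or alternatively the explicit bounds \eqref{AcotUnif}--\eqref{AcotExp}, is a valid way to fill in that step.
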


In the Figure \ref{fig:PolApprox} appears the graphs of $\frac{(z+1)P_{25}(z)((\ell(z))^2+\pi^2)^{13}}{25!}$ and its approximation, $\frac{(\ell(z)+\pi i)^{26}+(\ell(z)-\pi i)^{26}}{((\ell(z))^2+\pi^2)^{13}}$. We do not show both graphs in the same Cartesian coordinate system because they are strikingly similar.

Additionally, to obtain an asymptotic behavior for the zeros of $P_n$ in the bulk of the interval $(-1,0)$ in the next section we need the following result about $P_n'$. We have
\[
\sum_{k=0}^\infty P_n'(z)\frac{t^k}{k!}=\frac{e^t-1}{(1-z(e^t-1))^2}.
\]
For $z\in(-1,0)$ this generating function, as function of $t$, has a poles of order 2 at $t=\ell(z)\pm \pi i$. The principal part of its Laurent series at these points are 
\[
\frac{1}{z(1+z)^2}\frac{1}{\left(t-(\ell(z)\pm \pi i)\right)^2}+\frac{1}{(1+z)^2}\frac{1}{\left(t-(\ell(z)\pm \pi i)\right)}.
\] 
Then following step by step the proof of Theorem \ref{teoAsympIn2} we obtain the following asymptotic behavior for $P_n'$ in $(-1,0)$. 

\begin{theo}  \label{teoAsympDer}  There exists a  sequence $(w_n)_{n\in\N}\subset (-1,0)$, $\lim_{n\to\infty}w_n=-1$ such that
\[
\frac{z(1+z)^2P_n'(z)((\ell(z))^2+\pi^2)^{(n+2)/2}}{n!\, n}-J_{n+1}(z)=o(\frac1{n})
\]
uniformly for $z\in[w_n,-1-w_n]$ as $n\to\infty$.
\end{theo}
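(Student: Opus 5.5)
The plan is to repeat the contour-integral argument used for Theorem \ref{teoAsympIn} and Theorem \ref{teoAsympIn2}, now applied to the generating function
\[
F(z,t)=\sum_{n\ge0}P_n'(z)\frac{t^n}{n!}=\frac{e^t-1}{(1-z(e^t-1))^2}.
\]
Fix $z\in(-1,0)$ and write $a=\ell(z)+\pi i$ and $\rho=|a|=\sqrt{\ell(z)^2+\pi^2}$.

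\textbf{Step 1: subtract the principal parts.} Subtract from $F(z,\cdot)$ the principal parts at $a$ and $\bar a$ displayed before the statement. The resulting function $\tilde h(z,t)$ is analytic in $|t|<|\ell(z)+3\pi i|$. Cauchy's formula on the circle $|t|=\sqrt{\ell(z)^2+2\pi^2}$ then gives
\[
\frac{P_n'(z)}{n!}=\frac{n+1}{z(1+z)^2}\Big(a^{-(n+2)}+\bar a^{-(n+2)}\Big)-\frac{1}{(1+z)^2}\Big(a^{-(n+1)}+\bar a^{-(n+1)}\Big)+\frac{1}{2\pi i}\oint\frac{\tilde h(z,t)}{t^{n+1}}\,\dd t .
\]
Here I use that the $t^n$ coefficient of $(t-a)^{-2}$ is $(n+1)a^{-(n+2)}$ and that of $(t-a)^{-1}$ is $-a^{-(n+1)}$.

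\textbf{Step 2: identify the main term.} Since $a^{-(n+2)}\rho^{n+2}=\bar a^{\,n+2}/\rho^{n+2}$, multiplying by $z(1+z)^2\rho^{n+2}/(n!\,n)$ gives
\[
\frac{z(1+z)^2P_n'(z)\rho^{n+2}}{n!\,n}=\frac{n+1}{n}J_{n+1}(z)-\frac{z\rho}{n}J_n(z)+R_n(z).
\]
Here $R_n(z)=\dfrac{z(1+z)^2\rho^{n+2}}{n}\cdot\dfrac{1}{2\pi i}\oint\tilde h\,t^{-n-1}\,\dd t$.

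\textbf{Step 3: bound the remainder.} This is where I need an analogue of Lemma \ref{lemAcotaFuncSinSingularidad} for $\tilde h$. I would write $\tilde h$ through the partial fraction expansion of $\frac{\dd}{\dd u}\frac{1}{e^u+1}$, which has double poles at $u=\pm(2m-1)\pi i$. Then, as in the proof of that lemma, I would bound $\tilde h$ on the circle by $C_1+C_2\ell(z)^2$ times a bounded factor $(1+z)^{-2}$. The factor $z(1+z)^2$ in front absorbs that factor. The remaining geometric factor is
\[
\left(\frac{\ell^2+\pi^2}{\ell^2+2\pi^2}\right)^{n/2}.
\]
Choose $w_n=-e^{L_n}/(1+e^{L_n})$ with $L_n=c\sqrt{n/\log n}$ and $c$ small. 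By \eqref{AcotExp} this factor is $n^{-\pi^2/(2c^2)+o(1)}$, uniformly on $[w_n,-1-w_n]$. By the symmetry \eqref{simtRel}, it suffices to treat $[w_n,-1/2]$. Taking $c$ small enough makes $R_n=o(n^{-\delta})$ for any prescribed $\delta$, in particular $R_n=o(1/n)$.

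\textbf{Main obstacle: the explicit terms of size $1/n$.} These are
\[
\frac{1}{n}\Big(J_{n+1}(z)-z\rho\,J_n(z)\Big).
\]
Both $|J_{n+1}|\le2$ and $|z\rho J_n|\le 2\rho$ are $O(1/n)$ on the whole range, but neither is a priori $o(1/n)$. To reach the statement as worded, I would first try to show that this combination is absorbed. One option is to recheck the exact principal part, in particular the coefficient of the simple-pole term, to see whether the two contributions cancel. Another is to refine the normalization $\rho^{n+2}$ against $\rho^{n+1}$.

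If no cancellation occurs, the argument still yields the stated form with the identity of Step 2. In that case the error term must be read as $O(1/n)$, or the correction kept explicitly. This is precisely what I would check carefully before finalizing.
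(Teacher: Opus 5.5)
Your Steps~1--2 follow the paper's route exactly: same generating function, same principal parts at $\ell(z)\pm\pi i$, same contour and $w_n$. The paper only adds ``following step by step the proof of Theorem~\ref{teoAsympIn2}''. The check you postpone, however, comes out negative, and it is decisive. The terms $\frac1n\bigl(J_{n+1}-z\rho J_n\bigr)$ do not cancel, so the statement with $o(1/n)$ is false, and neither your argument nor the paper's sketch can prove it.

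Take $z=-\frac12$, which lies in every $[w_n,-1-w_n]$. There $\ell=0$, $\rho=\pi$, and $F(-\frac12,t)=-2+\sum_a\bigl(4(t-a)^{-1}-8(t-a)^{-2}\bigr)$ over $a=\pm(2m-1)\pi i$. With $\lambda(s)=\sum_{m\ge1}(2m-1)^{-s}\ge1$ this gives exactly
\[
\frac{z(1+z)^2P_n'(z)\rho^{n+2}}{n!\,n}\bigg|_{z=-1/2}=
\begin{cases}
\dfrac{n+1}{n}\,\lambda(n+2)\,J_{n+1}(-\tfrac12), & n\ \text{even},\\[6pt]
\dfrac{\pi}{n}\,\lambda(n+1)\cos\dfrac{(n+1)\pi}{2}, & n\ \text{odd},
\end{cases}
\]
while $J_{n+1}(-\frac12)=2\cos\frac{(n+2)\pi}{2}$ is $\pm2$ for even $n$ and $0$ for odd $n$. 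Hence $n$ times the difference has modulus at least $2$ for every $n$. For instance, $P_4'(-\frac12)=2$ gives $-\pi^6/384\approx-2.504$ against $J_5(-\frac12)=-2$. The odd case also rules out your second rescue. There $J_{n+1}(-\frac12)=0$, so no renormalization of the left-hand side helps; the whole discrepancy is the simple-pole term.

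Your Step~3 is fine. More cleanly, with $u=t-\ell(z)$ and $G$ as in \eqref{HiperbFractionDecomposition}, one has exactly $z(1+z)^2\tilde h(z,t)=G'(u)-zG(u)$, so no unbounded factor $(1+z)^{-2}$ ever appears. What the argument actually proves is
\[
\frac{z(1+z)^2P_n'(z)\rho^{n+2}}{n!\,n}-J_{n+1}(z)=\frac{J_{n+1}(z)-z\rho J_n(z)}{n}+o(n^{-\delta})
\]
uniformly on $[w_n,-1-w_n]$, for $c$ small depending on $\delta$.

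Your fallback of reading the error as $O(1/n)$ also needs correcting: it holds only on compact subsets of $(-1,0)$. Near $z=w_n$ one has $|z|\approx1$ and $\rho\approx L_n\to\infty$, while $J_n$ keeps oscillating between $\pm2$. So the right-hand side there reaches size $L_n/n$, and your remark that $|z\rho J_n|/n$ is $O(1/n)$ ``on the whole range'' is wrong. The bulk version is all that the proof of Theorem~\ref{teoAproxZerosBulk} uses. That proof only needs $P_n'$ to keep a fixed sign near bulk zeros of $J_n$, where $|J_{n+1}|=2\sin\theta(z)$ is bounded below.

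Finally, differentiating \eqref{simtRel} brings in $P_n$ itself, so the derivative expression is not symmetric about $-\frac12$. Drop that reduction and estimate on the whole interval. This works because $G(-u)=1-G(u)$ and $G'(-u)=G'(u)$, so your bounds depend only on $|\ell(z)|$.
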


\begin{figure}
\centering
\begin{subfigure}[b]{0.45\textwidth}
\includegraphics[width=\textwidth]{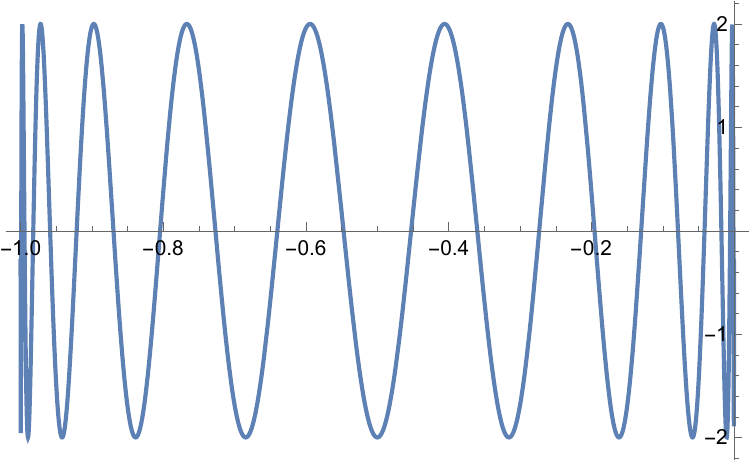}
\end{subfigure}
\begin{subfigure}[b]{0.45\textwidth}
\includegraphics[width=\textwidth]{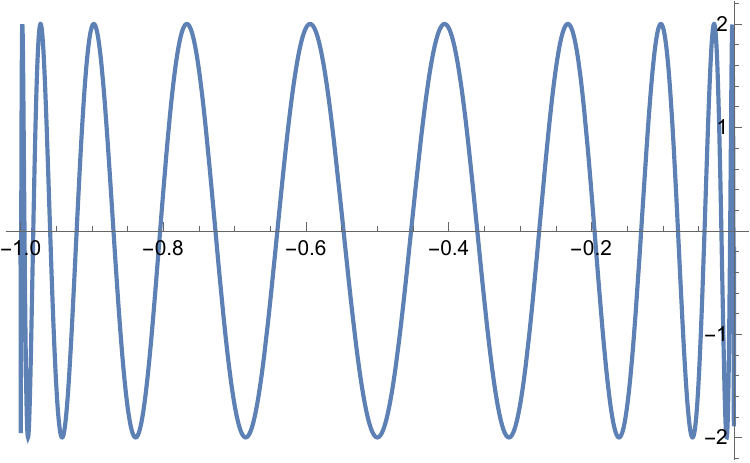}
\end{subfigure}
\caption{Graphics  of $\frac{(z+1)P_{25}(z)((\ell(z))^2+\pi^2)^{13}}{25!}$ (left) and its approximation, $\frac{(\ell(z)+\pi i)^{26}+(\ell(z)-\pi i)^{26}}{((\ell(z))^2+\pi^2)^{13}}$ (right).}\label{fig:PolApprox}
\end{figure}

The following results are analogous of the previous ones  for Eulerian polynomials.
\begin{theo} We have
\begin{equation}\label{eq:euler-limit-x} 
\lim_{n\to\infty}\frac{A_n(z)}{n!}\,\frac{(\log z)^{n+1}}{(z-1)^n}=\frac{z-1}{z}
\end{equation}
uniformly on compact subsets of $\overline{\C}\setminus(-\infty,0]$
\end{theo}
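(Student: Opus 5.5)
The plan is to transfer Theorem \ref{teoAsymp} through the connection formula \eqref{conecEulerianPolvsGeomPol} via the Möbius change of variables $u=\frac{1}{z-1}$, the same one used in \eqref{eq:xu}. The map $z\mapsto u=1/(z-1)$ is a homeomorphism of $\overline{\C}$. We need the image of $(-\infty,0]\cup\{1\}$. It sends $0\mapsto -1$ and $-\infty\mapsto 0$, and $(-\infty,0)$ onto $(-1,0)$, so $(-\infty,0]$ goes exactly onto $[-1,0)$. It also sends $z=\infty$ to $u=0$ and $z=1$ to $u=\infty$. Hence a compact set $K\subset\overline{\C}\setminus(-\infty,0]$ is mapped onto a compact set $K'\subset\overline{\C}\setminus[-1,0)$. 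The point $z=\infty$ is the delicate one, since it lands on $u=0\in[-1,0]$. I would therefore first prove the statement for compact subsets of $\C\setminus(-\infty,0]$, where $K'$ avoids $[-1,0]$ entirely, and treat $\infty$ separately at the end.

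Next I compute the relevant quantities in the new variable. With $u=1/(z-1)$ we have $1+u=\frac{z}{z-1}$, so $\frac{1+u}{u}=z$ and $\log\frac{1+u}{u}=\log z$. This is the principal branch, which is legitimate because $z\notin(-\infty,0]$. Also $\frac{1}{1+u}=\frac{z-1}{z}$. Substituting $P_n(u)=A_n(z)/(z-1)^n$ into \eqref{convForm} gives
\[
\frac{A_n(z)}{n!}\,\frac{(\log z)^{n+1}}{(z-1)^n}=\frac{P_n(u)\left(\log\frac{1+u}{u}\right)^{n+1}}{n!}\longrightarrow\frac{1}{1+u}=\frac{z-1}{z}.
\]
Theorem \ref{teoAsymp} gives this convergence uniformly for $u\in K'$, and therefore uniformly for $z\in K$. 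The point $z=1$ (that is, $u=\infty$) is already covered, because Theorem \ref{teoAsymp} holds on compacts of $\overline{\C}\setminus[-1,0]$. At $z=1$ both sides are understood by analytic continuation: $(\log z)/(z-1)\to1$, so the left side is $A_n(1)(z-1)/n!\cdot(1+o(1))\to 0$, matching the right side, and the geometric rate of Remark \ref{obsGemConv} persists.

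The main obstacle is uniformity near $z=\infty$. There $u\to0$, which is an endpoint of the excluded interval, so Theorem \ref{teoAsymp} cannot be invoked directly. I would argue with the maximum principle, as in the last step of the proof of Theorem \ref{teoAsymp}. The palindromic relation \eqref{eq:pal} gives $A_n(z)=z^{n-1}A_n(1/z)$, and $\log(1/z)=-\log z$ off the cut, so
\[
\frac{A_n(z)(\log z)^{n+1}}{n!(z-1)^n}=-\frac{z^{n}}{(z-1)^{n}}\cdot\frac{(1-w)^n}{(-1)^n}\cdot\frac{A_n(w)(\log w)^{n+1}}{n!\,(w-1)^n}\cdot\frac{(-1)^{n}}{z}\,(-1)^{n},\qquad w=1/z.
\]
After simplification the prefactors combine to $z^{n}(1-w)^n/(z-1)^n=1$ up to sign and a factor $1/z$. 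So the expression at $z$ equals, up to an explicit factor, the same expression at $w=1/z$, which lies near $0$. This reduces uniformity near $\infty$ to uniformity near $0$, which is again a boundary point of the cut. So the palindromic symmetry by itself does not settle the endpoint issue.

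I would therefore read the statement as uniform convergence on compact subsets of $\overline{\C}\setminus(-\infty,0]$ with $\infty$ regarded as lying on the closure of the cut. In that case the first two paragraphs already constitute the proof, and I would only check that $K'$ misses $[-1,0]$. If $\infty$ must be included, the fallback is a maximum principle on level curves $|\log z|=R$, mirroring the final paragraph of the proof of Theorem \ref{teoAsymp}.
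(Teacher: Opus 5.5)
Your first two paragraphs are the argument the paper intends. The paper gives no separate proof and presents the theorem as the transfer of Theorem~\ref{teoAsymp} through \eqref{conecEulerianPolvsGeomPol}. Those two paragraphs are a complete proof on compact subsets of $\C\setminus(-\infty,0]$, including the point $z=1$, which corresponds to $u=\infty$.

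The gap is your treatment of $z=\infty$. It is not a technical obstacle waiting for a maximum-principle argument: the claimed limit is false there. Since $A_n$ has degree $n-1$ and leading coefficient $1$, for each fixed $n$ we have $A_n(z)/(z-1)^n\sim 1/z$. Equivalently, $P_n(u)\sim u$ as $u\to 0$, because $P_n(0)=0$ and $P_n'(0)=1$. Combined with $|\log z|\le\log|z|+\pi$, this gives
\[
\frac{A_n(z)}{n!}\,\frac{(\log z)^{n+1}}{(z-1)^n}\longrightarrow 0\qquad (z\to\infty,\ z\notin(-\infty,0]),
\]
whereas $\frac{z-1}{z}\to 1$. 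So on any compact set containing $\infty$, the supremum of the error is at least $1$ for every $n$, and no fallback can rescue it.

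The maximum principle worked at $u=\infty$ in Theorem~\ref{teoAsymp} because that point lies at positive distance from $[-1,0]$. Here $z=\infty$ is the image of $u=0$, an endpoint of the cut. Your palindromic computation simplifies to $E_n(z)=-E_n(1/z)/z$, where $E_n$ denotes the left-hand expression, and it only moves the problem to the other endpoint $z=0$.

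So rather than choosing a reading of the statement, you should assert that the convergence holds uniformly on compact subsets of $\C\setminus(-\infty,0]$ and fails at $\infty$. This is also the domain used in Lemma~\ref{lem:PF} and Corollary~\ref{LimCocRaizEulerian}. The $\overline{\C}$ of Theorem~\ref{teoAsymp} contributes only the point $z=1$, which already lies in $\C$.
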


In \eqref{eq:euler-limit-x} the value at $z=1$ of the expression in the limit is zero because for all $n\in\N$
\[
\lim_{z\to 1}\frac{A_n(z)}{n!}\,\frac{(\log z)^{n+1}}{(z-1)^n}=0.
\]
All analogous expression for Eulerian polynomials are understood that the value at $z=1$ is defined by the corresponding limit. Let $\mu_{A_n}$ be the normalized counting measure on the zeros of polynomial $A_n$.

\begin{cor} \label{LimCocRaizEulerian} We have
\[
\lim_{n\to\infty}\frac{nA_{n-1}(z)}{A_n(z)}=\frac{\log z}{z-1},
\]
\[
\lim_{n\to\infty}\left|\frac{A_n(z)}{n!}\right|^{1/n}=\left|\frac{z-1}{\log z}\right|,
\]
and 
\[
\lim_{n\to\infty}\int\log|z-t|\, d\mu_{A_n}(t)=\log \left|\frac{z-1}{\log z}\right|.
\]
All above limits  hold uniformly on compact subsets of $\C\setminus (-\infty,0]$.
\end{cor}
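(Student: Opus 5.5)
The plan is to transfer Corollary \ref{corLimRaizNesima} to Eulerian polynomials through the connection formula \eqref{conecEulerianPolvsGeomPol}. This means working with the map $w=\frac{1}{z-1}$, which sends $\overline{\C}\setminus(-\infty,0]$ onto $\overline{\C}\setminus[-1,0]$.

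First I would check what the map does to the relevant sets and functions. For $z\in(-\infty,0]$ we get $w=\frac1{z-1}\in[-1,0)$, and $z=1$ goes to $w=\infty$. Hence compact subsets $K\subset\C\setminus(-\infty,0]$ correspond to compact subsets of $\overline{\C}\setminus[-1,0]$ (possibly containing $\infty$). We also have $\frac{1+w}{w}=z$, so $\log\frac{1+w}{w}=\log z$ with the principal branch on both sides. For the ratio, \eqref{conecEulerianPolvsGeomPol} gives
\[
\frac{nA_{n-1}(z)}{A_n(z)}=\frac{nP_{n-1}(w)(z-1)^{n-1}}{P_n(w)(z-1)^n}=\frac{1}{z-1}\,\frac{nP_{n-1}(w)}{P_n(w)}.
\]
The first limit of Corollary \ref{corLimRaizNesima} then yields $\frac{\log z}{z-1}$, uniformly on $K$. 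At $z=1$ the factor $\frac1{z-1}$ blows up while $\log z\to0$, so I would treat it separately. The cleanest route is to use the uniform convergence on compact sets of $\overline{\C}\setminus[-1,0]$ including $\infty$, together with the geometric rate in Remark \ref{obsGemConv}. Alternatively, the equivalent Theorem for $A_n$ (the limit \eqref{eq:euler-limit-x}) can be used directly: divide the $n$-th expression by the $(n-1)$-th and multiply by $\frac{\log z}{z-1}\cdot\frac{1}{n}\cdot n$. This gives the ratio limit wherever $\frac{z-1}{z}\neq0$, and the remaining point $z=1$ is handled by continuity and the maximum principle on a small circle around it.

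For the $n$-th root, I would write $\frac{|A_n(z)|}{n!}=\frac{|P_n(w)|}{n!}\,|z-1|^n$ and take $n$-th roots. Then Corollary \ref{corLimRaizNesima} gives $\frac{|z-1|}{|\log z|}$ for $z\ne1$. Near $z=1$, I would instead use \eqref{eq:euler-limit-x}: the quantity $\frac{|A_n(z)|}{n!}\,\frac{|\log z|^{n+1}}{|z-1|^n}$ tends to the nonvanishing function $\frac{|z-1|}{|z|}\cdot$, or more safely to its analytic extension $\frac{\log z}{z-1}\cdot\frac{z-1}{z}\cdot\frac{z-1}{\log z}$ after regrouping. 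Taking the $1/n$-power makes bounded nonzero factors tend to $1$ uniformly. The potential statement follows because $A_n$ has degree $n-1$ and leading coefficient $1$ (palindromic), so $\int\log|z-t|\,d\mu_{A_n}=\frac1n\log|A_n(z)|$ up to the normalization $\frac{n-1}{n}$. Then $\frac1n\log|A_n|=\log|A_n/n!|^{1/n}+\frac1n\log n!$.

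\textbf{Main obstacle.} This last step needs care: the $\frac1n\log n!$ term diverges. So the normalization of $\mu_{A_n}$ must be matched the way the paper intends, presumably the statement implicitly compares with the $n!$-normalized quantity. I would mirror exactly the derivation of the third limit in Corollary \ref{corLimRaizNesima}, using the leading coefficient ($n!$ for $P_n$ versus $1$ for $A_n$, with $A_n(z)=(z-1)^nP_n(w)$). I expect the handling of $z=1$ and this normalization bookkeeping to be the only delicate points; everything else is a direct substitution.
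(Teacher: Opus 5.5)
Your route (substitute $w=\frac{1}{z-1}$, use $\frac{1+w}{w}=z$, and treat $z=1$, i.e.\ $w=\infty$, by the maximum principle) is the transfer the paper has in mind. The paper gives no separate proof; it relies on the analogy with Corollary~\ref{corLimRaizNesima} via \eqref{conecEulerianPolvsGeomPol} and \eqref{eq:euler-limit-x}. Your first limit goes through as you describe.

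The difficulty you flag in the last step, however, is not bookkeeping: the third limit is false as stated, and mirroring Corollary~\ref{corLimRaizNesima} cannot rescue it. There, $\int\log|z-t|\,d\mu_{P_n}(t)=\log|P_n(z)/n!|^{1/n}$ holds because $P_n/n!$ is monic of degree $n$. Here $A_n$ itself is monic of degree $n-1$ (by \eqref{eq:pal} and $A_n(0)=1$), so
\[
\int\log|z-t|\,d\mu_{A_n}(t)=\frac{1}{n-1}\log|A_n(z)|=\frac{n}{n-1}\log\left|\frac{A_n(z)}{n!}\right|^{1/n}+\frac{\log n!}{n-1}.
\]
The last term tends to $+\infty$, and normalizing by $\frac1n$ instead changes nothing. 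Concretely, at $z=1$ we have $A_n(1)=n!$, so the left side equals $\frac{\log n!}{n-1}\to\infty$, while $\log\left|\frac{z-1}{\log z}\right|=0$ there. What your argument actually proves, and what the corollary should say, is
\[
\lim_{n\to\infty}\left(\int\log|z-t|\,d\mu_{A_n}(t)-\frac{\log n!}{n-1}\right)=\log\left|\frac{z-1}{\log z}\right|
\]
uniformly on compact subsets of $\C\setminus(-\infty,0]$. This follows from the second limit, because $\left|\frac{z-1}{\log z}\right|$ is bounded away from $0$ and $\infty$ on such sets. You should state this correction explicitly rather than leave the normalization ``as the paper intends.''

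There is also a smaller slip in the second limit near $z=1$. The limit of $\frac{|A_n(z)|}{n!}\frac{|\log z|^{n+1}}{|z-1|^n}$ is $\left|\frac{z-1}{z}\right|$, which vanishes at $z=1$. Your ``regrouped'' product $\frac{\log z}{z-1}\cdot\frac{z-1}{z}\cdot\frac{z-1}{\log z}$ is the same function, so taking $n$-th roots there is not justified as written. Remove one factor of $\log z$ instead and set $F_n(z)=\frac{A_n(z)}{n!}\left(\frac{\log z}{z-1}\right)^{n}$.
\begin{itemize}
\item $F_n$ is analytic near $z=1$.
\item On a small circle $|z-1|=r$ it converges uniformly to $\frac{z-1}{z\log z}$: divide \eqref{eq:euler-limit-x} by $\log z$, which is bounded below there.
\item By the maximum principle, the convergence is then uniform on the whole disk.
\item The limit equals $1$ at $z=1$ and does not vanish on a small disk, so $|F_n|^{1/n}\to1$ uniformly there.
\end{itemize}
This gives the second limit near $z=1$.
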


Define for $x\in(-\infty,0)$,
\begin{equation}\label{eq:JnEuler}
 \widehat{J}_n(x)=\frac{\big(\log|x|+\pi i\big)^{n+1}+\big(\log|x|-\pi i\big)^{n+1}}{\big(\,(\log|x|)^2+\pi^2\,\big)^{(n+1)/2}}=J_n\!\left(\frac{1}{x-1}\right).
\end{equation}
With $c,\,L_n,\,z_n$ as in Theorem~\ref{teoAsympIn}, define $x_n=-\e^{-L_n}$ and the segment $I_n$ as in \eqref{eq:Xn}. Let
\begin{equation}\label{eq:Xn}
x_n = 1+\frac{1}{z_n} = -\mathrm{e}^{-L_n}\in(-\infty,0).
\end{equation}

\begin{theo}\label{thm:euler}
It holds
\begin{equation}\label{eq:euler-limit}
\lim_{n\to\infty}\Bigg(\frac{x\,A_n(x)\,\big(\,(\log|x|)^2+\pi^2\,\big)^{(n+1)/2}}{n!\,(x-1)^{n+1}}-
 \widehat{J}_n(x)\Bigg)=0
\end{equation}
uniformly for $x\in \Big[\frac1{x_n},\ x_n \Big]$.
\end{theo}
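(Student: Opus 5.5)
The plan is to deduce Theorem~\ref{thm:euler} from Theorem~\ref{teoAsympIn} by the change of variables \eqref{eq:xu}, using the connection formula \eqref{conecEulerianPolvsGeomPol}. No new contour estimate is needed; the whole point is to check that the two normalizations and the two intervals correspond exactly.

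\textbf{Step 1 (the substitution).} For $x\in(-\infty,0)$ I set $z=\frac{1}{x-1}$. Since $x-1<-1$, this gives $z\in(-1,0)$, and the map $x\mapsto z$ is a continuous monotone bijection from $(-\infty,0)$ onto $(-1,0)$. I will then compute each piece of the Geometric expression in terms of $x$:
\[
1+z=\frac{x}{x-1},\qquad \frac{1+z}{z}=x,\qquad \ell(z)=\log|x|.
\]
So $(\ell(z))^2+\pi^2=(\log|x|)^2+\pi^2$, and $J_n(z)=\widehat J_n(x)$, which is the identity already recorded in \eqref{eq:JnEuler}. By \eqref{conecEulerianPolvsGeomPol}, $P_n(z)=A_n(x)/(x-1)^n$. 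Hence
\[
\frac{(1+z)P_n(z)((\ell(z))^2+\pi^2)^{(n+1)/2}}{n!}=\frac{x\,A_n(x)\,((\log|x|)^2+\pi^2)^{(n+1)/2}}{n!\,(x-1)^{n+1}}.
\]
This is exactly the expression appearing in \eqref{eq:euler-limit}. Therefore the quantity inside the limit in \eqref{eq:euler-limit} at $x$ equals the quantity inside the limit of Theorem~\ref{teoAsympIn} at $z=1/(x-1)$.

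\textbf{Step 2 (matching the intervals).} Next I check that $z\in[z_n,-1-z_n]$ corresponds exactly to $x\in[1/x_n,x_n]$.
\begin{itemize}
\item The inverse map is $x=1+1/z$. At $z=z_n=-e^{L_n}/(1+e^{L_n})$ this gives $x=1-(1+e^{L_n})e^{-L_n}=-e^{-L_n}=x_n$, in agreement with \eqref{eq:Xn}.
\item At $z=-1-z_n=-1/(1+e^{L_n})$ it gives $x=1-(1+e^{L_n})=-e^{L_n}=1/x_n$.
\end{itemize}
The map $z\mapsto 1+1/z$ is monotone on $(-1,0)$, so it sends the segment $[z_n,-1-z_n]$ onto the segment with endpoints $x_n$ and $1/x_n$. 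Since $x_n=-e^{-L_n}\in(-1,0)$ and $1/x_n<-1$, that segment is $[1/x_n,x_n]$.

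\textbf{Step 3 (conclusion).} The suprema over the two intervals of the moduli of the two (equal) expressions therefore coincide for each $n$. The uniform convergence asserted in Theorem~\ref{teoAsympIn} on $[z_n,-1-z_n]$ then transfers verbatim to uniform convergence on $[1/x_n,x_n]$, which proves \eqref{eq:euler-limit}.

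\textbf{Main obstacle.} There is no analytic difficulty. The only step needing care is the bookkeeping of signs and powers:
\begin{itemize}
\item the factor $(1+z)=x/(x-1)$ must combine with $(x-1)^{-n}$ to give $(x-1)^{-(n+1)}$;
\item $\ell(z)$ must be taken as $\log|x|$ with the correct sign, so that the endpoint correspondence comes out as $[1/x_n,x_n]$ rather than reversed.
\end{itemize}
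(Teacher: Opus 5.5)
Your proposal is correct and follows the route the paper intends. The paper gives no separate argument for Theorem~\ref{thm:euler}; it obtains it from Theorem~\ref{teoAsympIn} through the substitution $z=1/(x-1)$ and \eqref{conecEulerianPolvsGeomPol}, which is why it defines $\widehat J_n(x)=J_n(1/(x-1))$ and $x_n=1+1/z_n$. Your checks that $(1+z)P_n(z)=xA_n(x)/(x-1)^{n+1}$ and $\ell(z)=\log|x|$, and that $[z_n,-1-z_n]$ maps monotonically onto $[1/x_n,x_n]$, supply exactly the bookkeeping the paper leaves implicit.
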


\section{Zeros behaviour}\label{sec:Zeros}

In this section, we demonstrate how Theorems \ref{teoAsympIn2} and \ref{teoAsympDer} let us to prove clock behaviour of the zeros of Geometric polynomial on the bulk of the corresponding interval.

As we stated after Theorem \ref{teoAsympIn}, we number also the zeros of $P_n(z)$ with respect to $-1/2$. The zeros of $P_n$ are denoted by $z_{j,n}$ for $j$ positive or negative, depending on whether they lie to the right or left of $z = -1/2$, starting from the zero nearest to $z = -1/2$ and taking into account that $z_{-j,n}+z_{j,n}=-1$. Similarly, the zeros of $J_n$ are denoted by $\eta_{j,n}$ for $j$ positive or negative, depending on whether they lie to the right or left of $z = -1/2$, starting from the zero nearest to $z = -1/2$. To sharp the information on the zeros of $P_n$ we will need the expression for $J_n(z)$ given in the next lemma which follows from 
\[
\overline{\ell(z)+\pi i}=\ell(z)-\pi i,\quad \left|\frac{(\ell(z)+\pi i)^{n+1}}{((\ell(z))^2+\pi^2)^{(n+1)/2}}\right|=1.
\]

\begin{lemma} \label{lemExpCosJn}We have
\[
J_n(z)=2\cos((n+1)\theta(z)),\quad z\in(-1,0),
\]
where the function $\theta(z)\in(0,\pi)$ is defined by
\[
e^{i\theta(z)}=\frac{\ell(z)+\pi i}{\left((\ell(z))^2+\pi^2\right)^{1/2}},\quad z\in (-1,0),
\]
and
\[
\ell(z)=\pi \cot(\theta(z)),\quad \theta=\theta(z)=\arctan\frac{\pi}{\ell(z)}\in(0,\frac{\pi}2).
\]
\end{lemma}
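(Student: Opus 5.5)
The identity is elementary, so the plan is to read off $J_n$ directly from the polar form of the complex number $\ell(z)+\pi i$. The only real care needed is in identifying the angle $\theta(z)$ correctly on each half of $(-1,0)$.

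First, fix $z\in(-1,0)$ and put
\[
r=r(z)=\big((\ell(z))^2+\pi^2\big)^{1/2}>0,\qquad w=\frac{\ell(z)+\pi i}{r}.
\]
Then $|w|=1$ and $\operatorname{Im} w=\pi/r>0$. Hence $w$ lies on the open upper unit semicircle, so there is a unique $\theta(z)\in(0,\pi)$ with $w=e^{i\theta(z)}$. Since $\ell(z)$ is real, $\overline{\ell(z)+\pi i}=\ell(z)-\pi i$, and therefore $(\ell(z)-\pi i)/r=\overline{w}=e^{-i\theta(z)}$.

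Second, substitute these into the definition of $J_n$. Because $r^{n+1}=((\ell(z))^2+\pi^2)^{(n+1)/2}$, we get
\[
J_n(z)=w^{n+1}+\overline{w}^{\,n+1}=e^{i(n+1)\theta(z)}+e^{-i(n+1)\theta(z)}=2\cos((n+1)\theta(z)).
\]

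Third, compare real and imaginary parts of $w=e^{i\theta}$. This gives $\cos\theta=\ell(z)/r$ and $\sin\theta=\pi/r\neq 0$, hence $\ell(z)=\pi\cot\theta(z)$. Since $\cot$ is a strictly decreasing bijection from $(0,\pi)$ onto $\mathbb{R}$, this determines $\theta(z)$ uniquely. It also shows that $\theta$ is a continuous function of $z$.

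The step I expect to need care is this identification of $\theta$ with an arctangent, because of the branch. The formula $\theta(z)=\arctan(\pi/\ell(z))\in(0,\pi/2)$ is exactly right when $\ell(z)>0$, that is, for $z\in(-1/2,0)$, where $|1+z|>|z|$. At $z=-1/2$ one has $\ell=0$ and $\theta=\pi/2$. On $(-1,-1/2)$ one has $\ell(z)<0$, and there the correct expression is $\theta(z)=\pi+\arctan(\pi/\ell(z))\in(\pi/2,\pi)$. I would record this through the symmetry $\ell(-1-z)=-\ell(z)$, which gives
\[
\theta(-1-z)=\pi-\theta(z).
\]
Consequently the arctangent formula in the statement should be read for $z\in(-1/2,0)$, or modulo this reflection. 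This costs nothing in what follows: $\cos((n+1)(\pi-\theta))=(-1)^{n+1}\cos((n+1)\theta)$, which is consistent with the symmetry \eqref{simtRel} and with the numbering of zeros relative to $-1/2$.
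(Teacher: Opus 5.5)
Your proof is correct and follows the paper's own route: $(\ell(z)+\pi i)/((\ell(z))^2+\pi^2)^{1/2}$ is a unit complex number whose conjugate is $(\ell(z)-\pi i)/((\ell(z))^2+\pi^2)^{1/2}$, so $J_n(z)$ is its $(n+1)$-st power plus the conjugate of that power, i.e.\ $2\cos((n+1)\theta(z))$. Your branch caveat is also right: $\theta(z)=\arctan(\pi/\ell(z))\in(0,\pi/2)$ holds only on $(-1/2,0)$, with $\theta(-1/2)=\pi/2$ and $\theta(z)=\pi+\arctan(\pi/\ell(z))$ on $(-1,-1/2)$; the paper in fact uses this full range $(0,\pi)$ later, since it places the zeros of $J_n$ at $\theta_{k,n}=\frac{(2k+1)\pi}{2(n+1)}$, $k=0,\ldots,n$.
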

We require another preliminary result concerning the behaviour of $J_n$ around its zeros, which follows directly from Lemma \ref{lemExpCosJn}.

\begin{lemma} \label{levelSetJ_n} Let $\beta>0$. Let $\eta_{j,n}^{(\beta)}$ be in the interval between $\eta_{j,n}$ and the point closest to it where $|J_n(z)|=1$ and $|J_n(\eta_{j,n}^{(\beta)})|=\frac{1}{n^{\beta}}$. Then
\[
 |\eta_{j,n}^{(\beta)}-\eta_{j,n}|\leq \frac{C}{n^{1+\beta}}.
\]
\end{lemma}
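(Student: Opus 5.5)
We write $J_n(z)=2\cos\big((n+1)\theta(z)\big)$ as in Lemma \ref{lemExpCosJn} and work in the variable $\theta$. The zeros $\eta_{j,n}$ of $J_n$ are exactly the points where $(n+1)\theta(\eta_{j,n})=\frac{\pi}{2}+k\pi$ for an integer $k$. The points closest to them where $|J_n|=1$ satisfy $(n+1)\theta=\frac{\pi}{2}+k\pi\pm\frac{\pi}{6}$. The point $\eta_{j,n}^{(\beta)}$ corresponds to a value $\theta^{(\beta)}$ with
\[
\big|\cos\big((n+1)\theta^{(\beta)}\big)\big|=\tfrac{1}{2n^{\beta}} .
\]
Near a zero of cosine we have $|\cos(\frac{\pi}{2}+k\pi+s)|=|\sin s|\ge \frac{2}{\pi}|s|$ for $|s|\le \pi/2$. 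Hence $(n+1)|\theta^{(\beta)}-\theta(\eta_{j,n})|\le \frac{\pi}{4n^{\beta}}$, and so
\[
|\theta(\eta_{j,n}^{(\beta)})-\theta(\eta_{j,n})|\le \frac{C}{n^{1+\beta}}.
\]

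Next we transfer this estimate from $\theta$ back to $z$ with the mean value theorem applied to the inverse function $z=z(\theta)$. We have $\ell(z)=\pi\cot\theta$ and $\ell(z)=\log\frac{1+z}{-z}$ on $(-1,0)$, so
\[
z(\theta)=-\frac{1}{1+e^{\pi\cot\theta}},\qquad
\frac{dz}{d\theta}=-\frac{\pi}{\sin^2\theta}\,\frac{e^{\pi\cot\theta}}{(1+e^{\pi\cot\theta})^2}.
\]
Using $\frac{e^{\ell}}{(1+e^{\ell})^2}\le\frac14$, this gives $|dz/d\theta|\le \frac{\pi}{4\sin^2\theta}$. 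Combining with the $\theta$-estimate yields $|\eta^{(\beta)}_{j,n}-\eta_{j,n}|\le \frac{C}{n^{1+\beta}}\sup\frac{1}{\sin^2\theta}$, where the supremum is over the segment between the two points.

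The main obstacle is the factor $1/\sin^2\theta$, which blows up as $\theta\to0$ or $\pi$, i.e. near the endpoints $-1$ and $0$. We control it using the actual smallness of $dz/d\theta$ there. The product $\frac{1}{\sin^2\theta}\frac{e^{\ell}}{(1+e^{\ell})^2}$ behaves like $(\ell^2+\pi^2)e^{-|\ell|}/\pi^2$, which is bounded uniformly in $\ell\in\mathbb R$. So in fact $|dz/d\theta|\le C$ on all of $(-1,0)$, with $C$ an absolute constant.

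With this uniform bound the mean value theorem gives $|\eta^{(\beta)}_{j,n}-\eta_{j,n}|\le C\,|\theta^{(\beta)}-\theta(\eta_{j,n})|\le \frac{C}{n^{1+\beta}}$, with $C$ independent of $j$ and $n$. This is the statement. The only care needed is checking the elementary bound $\sup_{\ell}(\ell^2+\pi^2)e^{-|\ell|}<\infty$.
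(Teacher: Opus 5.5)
Your proof is correct and follows the paper's argument. You pass to the variable $\theta$ via $J_n=2\cos((n+1)\theta)$, bound $|\theta^{(\beta)}-\theta_{j,n}|\le C/n^{1+\beta}$ (using Jordan's inequality where the paper uses $\arcsin$), and transfer back to $z$ by the mean value theorem with a uniform bound on $|dz/d\theta|$. The only difference is that the paper simply asserts the sharp bound $|dz/d\theta|\le \pi/4$, which is attained at $\ell=0$, whereas you justify a cruder but sufficient uniform bound through $\sup_{\ell}(\ell^2+\pi^2)e^{-|\ell|}<\infty$.
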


\begin{proof} From Lemma \ref{lemExpCosJn} we have $J_n(z)=2\cos((n+1)\theta)$, $\theta=\theta(z)$, and
\begin{equation}
\label{eqEstimDerZvsTheta}
 \frac{\dd z}{\dd\theta}=-\frac{\pi e^{\pi\cot\theta}}{\sin^2\theta(1+e^{\pi\cot\theta})^2},\quad \left|\frac{\dd z}{\dd\theta}\right|\le \frac{\pi}{4},\quad \theta\in(0,\frac{\pi}{2}).
\end{equation}
Let $\theta_{j,n}$ and $\theta_{j,n}^{\beta}$ be the corresponding value of $\eta_{j,n}$ and $\eta_{j,n}^{(\beta)}$. Then
\[
|\sin((n+1)(\theta_{j,n}-\theta_{j,n}^{\beta}))|=\left|\cos\left((n+1)\theta_{j,n}^{\beta}\right)\right|=\frac{1}{2n^{\beta}}.
\]
Hence there exists $C>0$ such that
\begin{equation}
\label{eqEstimTheta}
|\theta_{j,n}-\theta_{j,n}^{\beta}|=\frac{\arcsin(\frac{1}{2n^{\beta}})}{(n+1)}\le \frac{C}{n^{1+\beta}}.
\end{equation}
Then the lemma follows from the mean value theorem, \eqref{eqEstimDerZvsTheta} and \eqref{eqEstimTheta}.
\end{proof}

We are then in a position to prove the clock behavior of the zeros of $P_n(z)$ in the bulk of $(-1,0)$ of Theorem \ref{teoAproxZerosBulk}.

\begin{proof}[Proof of Theorem \ref{teoAproxZerosBulk}] Let us check only when $n$ is even, the other case is similar. 
According to Lemma \ref{lemExpCosJn} the zeros $\eta_{k,n}$ of $J_n(z)$ lie in $(-1,0)$ and satisfy
\[
e^{i\theta_{k,n}}=\frac{\ell(\eta_{k,n})+i\pi}{(\ell(\eta_{k,n})^2+\pi^2)^{1/2}},
\]
where $\theta_{k,n}=\frac{(2k+1)\pi}{2(n+1)}$, $k=0,1,\ldots,n$. Hence
\[
\ell(\eta_{k,n})=\pi \cot\theta_{k,n},
\]
and
\begin{equation}
\label{eqCero1}
\eta_{k,n}=\Phi(\ell(\eta_{k,n})),
\end{equation}
with
\begin{equation}
\label{eqPhi}
\Phi(t)=-\frac{1}{1+e^t}=-1/2+\frac14t+O(t^3),\quad \text{as}\quad t\to 0.
\end{equation}

As it is mentioned we order the zeros of $J_n$ by $\eta_{k,n}$  for $k$ positive or negative depending on whether they lie to the right or left of $z = -1/2$ stating from $z = -1/2$.  Combining \eqref{eqCero1} and \eqref{eqPhi} we get
\begin{equation}
\label{eqCero2}
\eta_{k,n}=-1/2+\frac{\ell(\eta_{k,n})}4+O((\ell(\eta_{j,n}))^3)=-\frac12 + \frac{\pi^2}{4}\,\frac{k}{n+1}  
  +\ o\!\Big(\frac{1}{n^3}\Big),
\end{equation}
as $n\to\infty$. By Lemma \ref{levelSetJ_n}, in the bulk the solutions of $J_n(z)=\pm 1/n$ lie close to the zeros of $J_n(z)$, with an error of order $1/n^2$. Therefore, Theorem \ref{teoAsympIn2} implies that for $n$ large enough, each $P_n$ has a zero within a neighbourhood of radius $O(1/n^2)$ of each bulk zero of $J_n$.  Moreover, when $J_n(z)=0$, $J_{n+1}(z)=\pm 2\sin\theta(z)$; this together with Theorem \ref{teoAsympDer} implies that $P_n$ is asymptotically strictly monotone in a neighbourhood of radius $O(1/n)$ around each zero of $J_n(z)$ in the bulk. Consequently, each $O(1/n^2)$-neighbourhood of a bulk zero of $J_n$ contains exactly one zero of $P_n(z)$. The result then follows from \eqref{eqCero2}. 
\end{proof}

\begin{figure}
\centering
\begin{subfigure}[b]{0.45\textwidth}
\includegraphics[width=\textwidth]{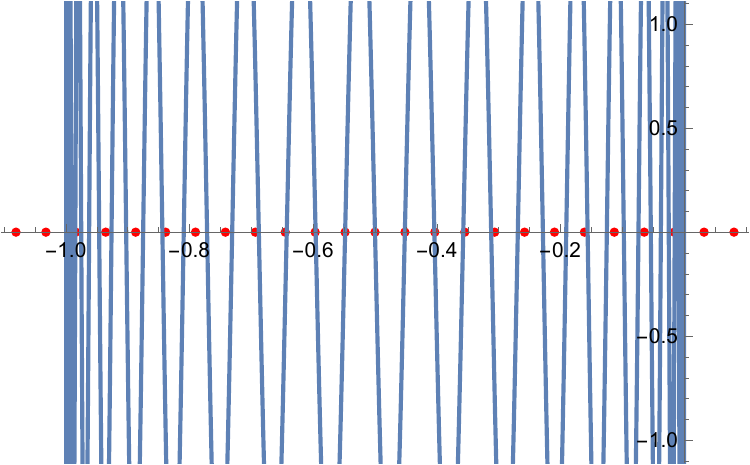}
\end{subfigure}
\begin{subfigure}[b]{0.45\textwidth}
\includegraphics[width=\textwidth]{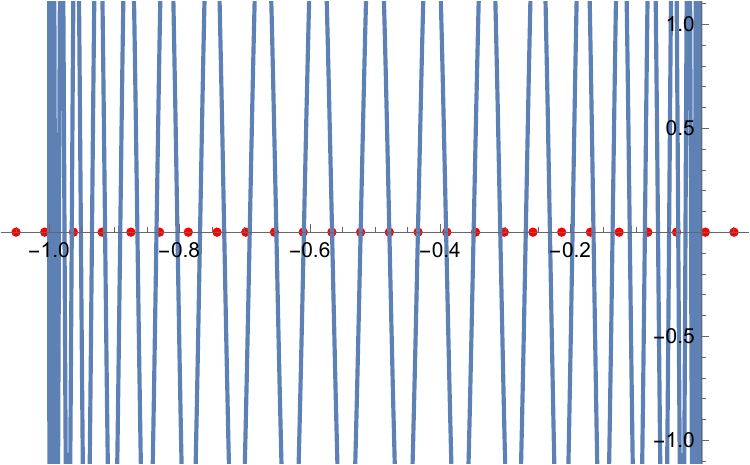}
\end{subfigure}
\caption{ The polynomial $P_{50}$ and, in red, the approximation of its zeros  (on the left). The polynomial $P_{55}$ and, in red, the approximation of its zeros (on the right).}\label{fig:AproxZeros}
\end{figure}

In Figure \ref{fig:AproxZeros} we can see polynomial $P_{50}$, $P_{55}$ and, in red, the approximation of its zeros  according to Theorem \ref{teoAproxZerosBulk}.

Using \eqref{conecEulerianPolvsGeomPol} is straightforward to obtain  the analogous result for the zeros of Eulerian polynomials.

\begin{theo}
\label{thm:bulk-Eulerian}
Let $\zeta_{k,n}$ denote the zeros of $A_n$ ordered by increasing distance from $x=-1$, with the convention $\zeta_{0,2n}=-1$ in the even case. Then as $n\to\infty$ we have the first-order bulk asymptotics:
\begin{align}
  \zeta_{k,2n}^{\pm} &= -1 \pm \pi^2\,\frac{k}{n+1} \; +\; o\!\left(\frac1n\right),\qquad k=0,1,2,\ldots, \label{eq:A-even}
  \\
 \zeta_{k,2n+1}^{\pm} &= -1 \pm \frac{\pi^2}{2}\,\frac{2k-1}{n+1} \; +\; o\!\left(\frac1n\right),\qquad k=1,2,\ldots, \label{eq:A-odd}
\end{align}
where the superscripts $+$ (resp. $-$) refer to zeros in $(-1,0)$ (resp. $(-\infty,-1)$). Moreover, the distance of consecutive zeros in the bulk of $(-\infty,0)$ is
$ \frac{\pi^2}{n} + o\!\left(\frac1n\right).$
\end{theo}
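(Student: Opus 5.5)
The plan is to transfer Theorem \ref{teoAproxZerosBulk} through the Möbius map in \eqref{conecEulerianPolvsGeomPol}. Since $P_n\big(\frac{1}{x-1}\big)=A_n(x)/(x-1)^n$ and $(x-1)^n\neq 0$ for $x<0$, the zeros of $A_n$ in $(-\infty,0)$ are exactly the images of the nonzero zeros of $P_n$ in $(-1,0)$ under
\[
\psi(z)=1+\frac1z,\qquad z\in(-1,0).
\]
This map is a decreasing diffeomorphism of $(-1,0)$ onto $(-\infty,0)$, with $\psi(-1/2)=-1$. It intertwines the symmetry $z\mapsto -1-z$ with the palindromic symmetry $x\mapsto 1/x$ of \eqref{eq:pal}, because
\[
\psi(-1-z)=\frac{z}{1+z}=\frac{1}{\psi(z)}.
\]
Zeros of $P_n$ to the right of $-1/2$ are therefore sent to zeros of $A_n$ to the left of $-1$, and conversely. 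Moreover, "nearest to $-1/2$" corresponds to "nearest to $-1$" within each side, since $\psi$ is monotone. So I will set $\zeta^{\mp}_{k,n}=\psi(z_{\pm k,n})$. In the even case this gives $\zeta_{0,2n}=\psi(-1/2)=-1$, which is consistent with $P_{2n}(-1/2)=0$ from Lemma \ref{lemZer}.

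Next I linearize $\psi$ at $-1/2$. Writing $z=-\frac12+h$ gives
\[
\psi(z)=1-\frac{2}{1-2h}=-1-4h+O(h^2).
\]
Insert $h=\frac{\pi^2}{4}\frac{k}{n+1}+o(1/n)$ from \eqref{eq:bulkEven}. For fixed $k$ we have $h=O(1/n)$, so the $O(h^2)$ term is $O(1/n^2)=o(1/n)$. This yields $-1-\pi^2\frac{k}{n+1}+o(1/n)$ for the image of the right-hand zero, and the mirror zero gives the $+$ sign. The odd case is identical starting from \eqref{eq:bulkOdd}, and the factor $4$ turns $\frac{\pi^2}{8}(2k-1)$ into $\frac{\pi^2}{2}(2k-1)$.

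For the spacing claim I will use the mean value theorem. If $z,z'$ are consecutive zeros of $P_n$ in a bulk interval $[-1-a,a]$, then
\[
|\psi(z)-\psi(z')|=|\psi'(\xi)|\,|z-z'|,\qquad \psi'(\xi)=-\frac{1}{\xi^2},
\]
for some $\xi$ between $z$ and $z'$. By Theorem \ref{teoAproxZerosBulk}, $|z-z'|=\frac{\pi^2}{4n}+o(1/n)$, and $|\psi'(-1/2)|=4$. Near $x=-1$ this gives the stated spacing $\pi^2/n+o(1/n)$.

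The main obstacle is the phrase ``in the bulk of $(-\infty,0)$''. Theorem \ref{teoAproxZerosBulk} only gives the spacing $\frac{\pi^2}{4n}$ in $z$ uniformly near the centre, through the linearization $\Phi(t)\approx -\frac12+\frac t4$. Away from $-1/2$ the true spacing in $z$ carries the factor $\Phi'(\ell)\cdot\frac{\dd\ell}{\dd\theta}$ coming from Lemma \ref{lemExpCosJn}. I would therefore read ``bulk'' as a fixed-index, or shrinking, neighbourhood of $x=-1$, which is the region where the constant $|\psi'|=4$ is exact to $o(1)$.

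If a genuinely uniform statement is required instead, I would recompute directly. Using Lemma \ref{lemExpCosJn} and $\theta_{k,n}=\frac{(2k+1)\pi}{2(n+1)}$, the zeros satisfy $\log|x|=\pi\cot\theta$ near $\theta=\pi/2$. I would then transfer the localization argument of the proof of Theorem \ref{teoAproxZerosBulk} via Theorem \ref{thm:euler} and the identity $\widehat J_n(x)=J_n\big(\frac{1}{x-1}\big)$.
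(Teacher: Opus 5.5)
Your route is the paper's own: the paper proves this theorem in one line by pushing Theorem~\ref{teoAproxZerosBulk} through \eqref{conecEulerianPolvsGeomPol}. Your execution of that transfer is correct: the expansion $\psi(-\tfrac12+h)=-1-4h+O(h^2)$, the intertwining $\psi(-1-z)=1/\psi(z)$, the side and sign bookkeeping, and $\zeta_{0,2n}=-1$. Your caveat about the word ``bulk'' is also justified, and the uniform reading is in fact false. Since $x=-\e^{\pi\cot\theta}$ and consecutive zeros of $\widehat J_N$ are $\pi/(N+1)$ apart in $\theta$, the spacing of the zeros of $A_N$ near a fixed $x<0$ is about $|x|\,(\pi^2+\log^2|x|)/N$. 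This equals $\pi^2/N$ only at $x=-1$.

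The genuine problem is in what you transfer. You import \eqref{eq:bulkEven}--\eqref{eq:bulkOdd} verbatim, and with the subscripts $2n$ and $2n+1$ they are off by a factor of about $2$. Your own proposal exhibits the inconsistency. In the spacing step you use $|z-z'|\approx\pi^2/(4N)$ for consecutive zeros of $P_N$, i.e.\ $\pi^2/(8n)$ for $P_{2n}$. But \eqref{eq:bulkEven} makes consecutive zeros of $P_{2n}$ differ by $\pi^2/(4(n+1))$.

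The computation in the proof of Theorem~\ref{teoAproxZerosBulk} is carried out in terms of the degree $N$. The zeros of $J_N$ are at $\theta=(2k+1)\pi/(2(N+1))$, so near $\theta=\pi/2$ they sit at $\frac{\pi}{2}\pm\frac{j\pi}{N+1}$ for $N$ even, or $\frac{\pi}{2}\pm\frac{(2j-1)\pi}{2(N+1)}$ for $N$ odd. Also $|\dd z/\dd\theta|=\pi/4$ at $\theta=\pi/2$. Hence
$z_{j,2n}=-\frac12+\frac{\pi^2}{4}\frac{j}{2n+1}+o(1/n)$ and $z_{j,2n+1}=-\frac12+\frac{\pi^2}{16}\frac{2j-1}{n+1}+o(1/n)$. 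Your transfer with $|\psi'(-\tfrac12)|=4$ then gives
\[
\zeta^{\pm}_{k,2n}=-1\pm\pi^2\,\frac{k}{2n+1}+o\Big(\frac1n\Big),\qquad
\zeta^{\pm}_{k,2n+1}=-1\pm\frac{\pi^2}{4}\,\frac{2k-1}{n+1}+o\Big(\frac1n\Big).
\]
These are consistent with the spacing $\pi^2/N$ for $A_N$ near $-1$ that you derived.

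A quick sanity check: $A_6(x)=(x+1)(x^4+56x^3+246x^2+56x+1)$ has its zero nearest to $-1$ on the left at $x\approx-4.54$, and $-\exp\big(\pi\cot\frac{5\pi}{14}\big)\approx-4.54$. So the relevant angular offset is $\pi/(N+1)=\pi/7$, not $\pi/(n+1)=\pi/4$ with $2n=6$ (which would give $-\e^{\pi}\approx-23$).

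So the transfer itself is sound, but \eqref{eq:A-even}--\eqref{eq:A-odd} as displayed cannot be proved. What your argument actually establishes is the corrected version above, once the $n+1$ in \eqref{eq:bulkEven}--\eqref{eq:bulkOdd} is read as $N+1$ with $N$ the degree.
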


\begin{rem} Analogous results can be obtained  for the polynomials $\mathcal{M}_n(z)$ defined by
\[
 \mathcal{M}_n(z)=(cz+d)^n\,P_n\!\left(\frac{az+b}{cz+d}\right),\quad n\ge0.
\]
where $\gamma=\begin{pmatrix}a&b\\ c&d\end{pmatrix}\in\mathrm{GL}_2(\C)$. The coefficients of $\mathcal{M}_n(z)$ will be integer number if $\gamma\in\mathrm{GL}_2(\Z)$ with $ad-bc=1$.

\end{rem}

\textbf{Acknowledgments.} The authors would like to thank Daniel Perales for letting us know about the paper \cite{JalKabMar}.

\newpage


\noindent
\textbf{M. Bello-Hernández}, corresponding author. Departamento de Matemáticas y Computación, Universidad de La Rioja. c/ Madre de Dios, 53, 26006 Logroño, La Rioja, Spain. Research supported in part by the grant PID2022--138342NB--I00 AEI, from Spanish Government. \\\texttt{mbello@unirioja.es}

\vspace{0.3cm}

\noindent
\textbf{M. Benito}, retired Professor. Instituto P. M. Sagasta, Logro{\~n}o and Departamento de Matemáticas y Computación, Universidad de La Rioja. c/ Madre de Dios, 53, 26006 Logroño, La Rioja, Spain. \\ \texttt{mbenitomunnoz@gmail.com}

\vspace{0.3cm}

\noindent
\textbf{Ó. Ciaurri}. Departamento de Matemáticas y Computación, Universidad de La Rioja. c/ Madre de Dios, 53, 26006 Logroño, La Rioja, Spain. Research supported in part by the grant PID2024-155593NB-C22 AEI, from Spanish Government. \\ \texttt{oscar.ciaurri@unirioja.es}

\vspace{0.3cm}

\noindent
\textbf{E. Fernández}, retired Professor. Instituto P. M. Sagasta, Logro{\~n}o and Departamento de Matemáticas y Computación, Universidad de La Rioja. c/ Madre de Dios, 53, 26006 Logroño, La Rioja, Spain. \\\texttt{emilio.fernandez@unirioja.es}

\end{document}